\newtheorem{theorem}{Theorem}[section]
\newtheorem{remark}{Remark}[section]
\newtheorem{corollary}[theorem]{Corollary}
\newtheorem{proposition}[theorem]{Proposition}
\newtheorem{lemma}[theorem]{Lemma}
\newtheorem{definition}[theorem]{Definition}
\newtheorem*{definition*}{Definition}
\begin{document}
\title{An inverse-type problem for cycles in local Cayley distance graphs}
\author{Thang Pham\thanks{Theory of Combinatorial Algorithms Group, ETH Zurich, Switzerland. Email: phamanhthang.vnu@gmail.com}}
\date{}
\maketitle
\begin{abstract}
Let $E$ be a proper symmetric subset of $S^{d-1}$, and $C_{\mathbb{F}_q^d}(E)$ be the Cayley graph with the vertex set $\mathbb{F}_q^d$, and two vertices $x$ and $y$ are connected by an edge if $x-y\in E$. Let $k\ge 2$ be a positive integer.  We show that for any $\alpha\in (0, 1)$, there exists $q(\alpha, k)$ large enough such that if $E\subset S^{d-1}\subset \mathbb{F}_q^d$ with $|E|\ge \alpha q^{d-1}$ and $q\ge q(\alpha, k)$, then for each vertex $v$, there are at least $c(\alpha, k)q^{\frac{(2k-1)d-4k}{2}}$ cycles of length $2k$ with distinct vertices in $C_{\mathbb{F}_q^d}(E)$ containing $v$. This result is the  inverse version of a recent result due to Iosevich, Jardine, and McDonald (2021). 
\end{abstract}
\noindent \textbf{Keywords}: Spherical configurations, Finite fields, Cayley graphs, Cycles. \\ \textbf{Mathematics Subject Classification}:  52C10, 11T23\\
\section{Introduction}
Let $G$ be an abelian finite group and a symmetric set $E\subset G$. The Cayley graph $C_G(E)$ is defined as the graph with the vertex set $V=G$, and there is an edge from $x$ to $y$ if $y-x\in E$. Let $\mathbb{F}_q$ be a finite field of order $q$, where $q$ is a prime power. In this paper, we consider $G$ being the whole vector space $\mathbb{F}_q^d$.

We have $C_{\mathbb{F}_q^d}(E)$ is a regular graph of degree $|E|$ with $q^d$ vertices. It is well--known in the literature that eigenvalues of $C_{\mathbb{F}_q^d}(E)$ are of the form  $\lambda_m:=\sum_{x\in E}\chi(x\cdot m)=\widehat{E}(m), ~m\in \mathbb{F}_q^d,$
where $\chi$ is the principle additive character of $\mathbb{F}_q$. Define $\mu:=\max_{m\ne (0, 0,\ldots, 0)} |\lambda_m|$. This quantity is referred as the second largest eigenvalue of $C_{\mathbb{F}_q^d}(E)$. We call a graph $(n, d, \lambda)$-graph if it has $n$ vertices, the degree of each vertex is $d$, and the second largest eigenvalue is at most $\lambda$.

When $E=S^{d-1}$, the unit sphere in $\mathbb{F}_q^d$, we recall a result from a paper of Iosevich and Rudnev \cite[Lemma 5.1]{ir} that $\mu=(1+o(1))q^{\frac{d-1}{2}}$. Thus, the graph $C_{\mathbb{F}_q^d}(S^{d-1})$ is a $(n, d, \lambda)$-graph with $n=q^d$, $d=|S^{d-1}|$, and $\lambda=(1+o(1))q^{\frac{d-1}{2}}$. In a $(n, d, \lambda)$-graph, we know from \cite[Theorem 4.10]{expander} that any large subset of vertices contains the correct number of copies of any fixed sparse graph. More precisely, let $H$ be a fixed graph with $r$ edges, $s$ vertices, and maximal degree $\Delta$, then any subset $A\subset \mathbb{F}_q^d$ of $m$ vertices with $m\gg \lambda\left(\frac{n}{d}\right)^\Delta$ contains about $m^s(d/n)^r$ copies of $H$. The condition $m\gg \lambda\left(\frac{n}{d}\right)^\Delta$ can be improved when $H$ is of some specific configuration, for instance, paths, stars, complete graphs \cite{bene, chapman, 11, 17, io2021, 34, vinh}. \footnote{We use the following notations: $X \ll Y$ means that there exists  some absolute constant $C>0$ such that $X \leq CY$, $X\sim Y$ means that $X\ll Y\ll X$, $X=o(Y)$ means that $\lim_{q\to \infty}X/Y=0$.}

In the inverse setting, we let $E$ be a proper subset of $S^{d-1}$ and $A=\mathbb{F}_q^d$, the question is to find conditions on $E$ such that the graph $C_{\mathbb{F}_q^d}(E)$, which will be called \textit{local Cayley distance graphs}, contains at least one copy of $H$. 

The main purpose of this paper is to study the inverse version of a recent result due to Iosevich, Jardine, and McDonald in \cite{io2021} on the distribution of cycles. We start by stating their result. 
\begin{theorem}[Iosevich-Jardine-McDonald, \cite{io2021}]\label{thm11}
Let $A$ be a set in $\mathbb{F}_q^d$. Suppose that $|A|\gg q^{\frac{d+2}{2}}$, then for any positive integer $\ell\ge 3$, the number of cycles of length $\ell$ in $C_{\mathbb{F}_q^d}(S^{d-1})$ with vertices in $A$ is  $(1+o(1))|A|^{\ell}q^{-\ell}$. In addition, when $\ell$ is large, then the exponent $\frac{d+2}{2}$ can be improved, namely, the condition 
\[|A|\ge \begin{cases}q^{\frac{1}{2}\left(d+2-\frac{\ell-4}{\ell-2}+\delta \right)}, ~\mathtt{if}~\ell\ge 4~ \mathtt{even}\\ q^{\frac{1}{2}\left(d+2-\frac{\ell-3}{\ell-1}+\delta \right)}, ~\mathtt{if}~\ell\ge 3~ \mathtt{odd}\end{cases},\]
where $0<\delta\ll \frac{1}{\ell^2}$,
would be enough. 
\end{theorem}
The following is our main result. 
\begin{theorem}\label{thm-lowerbound}
Let $d, k\in \mathbb{N}$ with $d\ge 4k+2$, $\alpha\in (0, 1)$ and $q\ge q(\alpha, k)$. For a symmetric set $E\subset S^{d-1}\subset \mathbb{F}_q^d$ with $|E|\ge \alpha q^{d-1}$,  the number of cycles of length $2k$ in $C_{\mathbb{F}_q^d}(E)$ with distinct vertices passing through each vertex of $C_{\mathbb{F}_q^d}(E)$ is at least $c(\alpha, k)q^{\frac{(2k-1)d-4k}{2}}$.
\end{theorem}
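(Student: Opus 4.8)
The plan is to reinterpret a $2k$-cycle through a fixed vertex $v$ as a \emph{closed unit polygon} and then count such polygons through the lens of spherical configurations. Writing a cycle as $v=v_0,v_1,\dots,v_{2k-1},v_{2k}=v_0$ with $e_i:=v_i-v_{i-1}\in E$, the tuple $(e_1,\dots,e_{2k})\in E^{2k}$ lies on $S^{d-1}$ and satisfies the closure relation $e_1+\cdots+e_{2k}=0$; conversely, any such tuple determines a unique closed walk rooted at $v$. The vertices are distinct exactly when no proper consecutive partial sum $e_{i+1}+\cdots+e_j$ (of length between $1$ and $2k-1$) vanishes. Thus the quantity to bound from below is the number of ordered tuples in $E^{2k}$ lying on the sphere with $\sum_i e_i=0$ and all proper consecutive partial sums nonzero. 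Because Proposition~\ref{thm1.9} shows that $C_{\mathbb{F}_q^d}(E)$ need not be pseudo-random, I cannot estimate this count via the eigenvalues $\lambda_m$; instead I will extract it from the robust, density-type count of Lyall--Magyar--Parshall in \cite{lyall}, which makes no pseudo-randomness hypothesis.

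First I would fix, once and for all, a single \emph{model} configuration: an ordered set $\Delta=\{f_1,\dots,f_{2k}\}\subset S^{d-1}$ of $2k$ distinct points spanning a $(2k-2)$-dimensional subspace, with centroid at the origin ($\sum_i f_i=0$) and with every proper consecutive partial sum nonzero. Since $\sum_i f_i=0$, the affine and linear spans coincide, so this is precisely a $2k$-spherical configuration spanning $2k-2$ dimensions in the sense of \cite{lyall}; it arises by specializing a generic closed unit $2k$-gon so that its span drops by one, and both the nonvanishing of the partial sums and the prescribed span are generic conditions, so such $\Delta$ exist once $q$ is large. Passing to a copy spanning $2k-2$ rather than the generic $2k-1$ dimensions is deliberate: this is exactly the largest dimension for which the hypothesis $d\ge 4k+2$ meets the Lyall--Magyar--Parshall dimension requirement, and it keeps the configuration rigid enough that distinctness is preserved under the symmetries below.

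Next I would apply the theorem of \cite{lyall} to the $\alpha$-dense subset $E\subseteq S^{d-1}$ to obtain at least $c(\alpha,k)\,q^{\frac{(2k-1)d-4k}{2}}$ congruent copies of $\Delta$ inside $E$. The crucial structural point is that every copy has the form $\{g f_1,\dots,g f_{2k}\}$ for an origin-fixing rotation $g\in O_d(\mathbb{F}_q)$, and rotations are linear: hence $\sum_i g f_i=g\sum_i f_i=0$, so the closure relation is automatically inherited, and $g(f_{i+1}+\cdots+f_j)=0$ if and only if $f_{i+1}+\cdots+f_j=0$, so the distinctness of all $2k$ vertices is inherited by every copy from the single model $\Delta$. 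Each copy, equipped with the cyclic order transported from $\Delta$, therefore yields a genuine $2k$-cycle with distinct vertices through $v$ via $v_j=v+g f_1+\cdots+g f_j$. Since the construction never refers to $v$ beyond the initial translation, the same count holds through every vertex, giving the ``through each vertex'' uniformity for free.

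The main obstacle, and the step needing the most care, is the dictionary between congruent copies of $\Delta$ and cycles. One must (i) confirm that the congruences supplied by \cite{lyall} are realized by origin-fixing rotations, so that both closure and distinctness transfer as above; (ii) pass from the copies counted in \cite{lyall} to the ordered, cyclically oriented tuples that cycles demand, and bound the multiplicity with which a single copy is revisited (a factor depending only on $k$ and on the symmetry of $\Delta$, hence absorbable into $c(\alpha,k)$); and (iii) reconcile the vanishing of $\|\sum_i e_i\|^2$ with the genuine vanishing of $\sum_i e_i$, since over $\mathbb{F}_q$ a nonzero isotropic vector could otherwise slip through—this is exactly why it is cleaner to encode the centroid condition into $\Delta$ and transport it by linear rotations than to impose it after the fact. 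Once this bookkeeping is settled, the Lyall--Magyar--Parshall count transfers to the desired lower bound $c(\alpha,k)\,q^{\frac{(2k-1)d-4k}{2}}$.
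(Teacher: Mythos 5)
Your reduction of the theorem to counting tuples $(e_1,\dots,e_{2k})\in E^{2k}$ with $\sum_i e_i=0$ and nonvanishing consecutive partial sums is sound, and viewing such tuples as $2k$-point spherical configurations spanning $2k-2$ dimensions is exactly the paper's connection to Lyall--Magyar--Parshall. But there is a genuine quantitative gap at the crucial step: you apply the LMP theorem to a \emph{single} model configuration $\Delta$ and claim this yields $c(\alpha,k)\,q^{\frac{(2k-1)d-4k}{2}}$ copies in $E$. That misquotes the bound. For a nondegenerate $(k'+2)$-point spherical configuration spanning $k'$ dimensions, Theorem \ref{lyalll} gives $c(\alpha,k')\,q^{\frac{(k'+1)d-(k'+1)(k'+2)}{2}}$ copies; with $k'=2k-2$ (your $2k$ points spanning $2k-2$ dimensions) this is $c(\alpha,k)\,q^{\frac{(2k-1)d-2k(2k-1)}{2}}$, which falls short of the target by a factor of $q^{\frac{2k(2k-1)-4k}{2}}=q^{2k^2-3k}$ --- a factor growing with $q$, not absorbable into $c(\alpha,k)$. (For $k=2$: a single class gives $q^{\frac{3d-12}{2}}$, versus the required $q^{\frac{3d-8}{2}}$.) This is precisely why the paper does not fix one model configuration: it proves (Proposition \ref{confi}) that there are $\gg q^{2k^2-3k}$ \emph{distinct congruence classes} of admissible good configurations, applies LMP to each class separately, and sums over the classes (Corollary \ref{qq}). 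That proposition --- established via the energy asymptotics $T_k(S^{d-1})=(1+o(1))|S^{d-1}|^{2k-1}/q$, a Cauchy--Schwarz orbit/stabilizer argument, and $|O(n,\mathbb{F}_q)|\sim q^{\binom{n}{2}}$ --- is the technical heart of the proof, and it is exactly the ingredient your argument is missing.

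A secondary issue, which you flag in your point (i) but do not resolve: the isometric copies produced by \cite{lyall} include translations (copies have the form $x_0+g(\Delta)$ with $g$ orthogonal), so with your normalization $\sum_i f_i=0$ the closure relation is \emph{not} automatically inherited: $\sum_i\bigl(gf_i+x_0\bigr)=2kx_0$, which is nonzero whenever $p\nmid 2k$ and $x_0\neq 0$. The paper circumvents this by parametrizing the configuration so that its last point is an \emph{affine} combination of the others, namely $u=x_0+\sum_{i=1}^{2k-2}(-1)^{i+1}(x_0+x_i)$, whose coefficients sum to $1$; affine relations are preserved by all isometries, so every copy counted by LMP automatically forms a $k$-energy tuple (Lemma \ref{step1}). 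Your construction could be repaired on this point by the same device, but the missing count of congruence classes remains the essential obstruction.
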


To prove Theorem \ref{thm-lowerbound}, several serious challenges arise, and the most difficulty comes from the fact the graph $C_{\mathbb{F}_q^d}(E)$ is not a \textit{pseudo-random graph}, namely, the second eigenvalue $\mu$  is arbitrary close to the graph degree when $q$ is large enough. 

\begin{proposition}\label{thm1.9}
For any $1\le m\ll q^{d-1}$ and $\epsilon>0$ with $1/\epsilon\in \mathbb{Z}$. Let $q=p^{\frac{1}{\epsilon}}$. There exists $E\subset S^{d-1}$ such that $|E|=m$ and $\mu \ge \frac{|E|}{2q^{\epsilon}}$.  In addition, if $|E+E|\sim |E|$, then we have $\mu\sim \lambda_{(0, \ldots, 0)}= |E|$. 
\end{proposition}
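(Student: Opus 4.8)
The plan is to exploit the subfield $\mathbb{F}_p\subset\mathbb{F}_q$ furnished by $q=p^{1/\epsilon}$ (so that $q^\epsilon=p$) together with the factorization of the principal character through the field trace. Write $\chi=\psi\circ\mathrm{Tr}$, where $\mathrm{Tr}\colon\mathbb{F}_q\to\mathbb{F}_p$ is the absolute trace and $\psi$ is the principal additive character of $\mathbb{F}_p$. Fix a nonzero $\xi_0\in\mathbb{F}_q^d$ and restrict attention to the $p$ frequencies $t\xi_0$, $t\in\mathbb{F}_p$. Since $\mathrm{Tr}$ is $\mathbb{F}_p$-linear, for $t\in\mathbb{F}_p$ we have
\[
\lambda_{t\xi_0}=\sum_{x\in E}\chi\big(t(x\cdot\xi_0)\big)=\sum_{x\in E}\psi\big(t\,\mathrm{Tr}(x\cdot\xi_0)\big),
\]
and summing over $t$ and using orthogonality of $\psi$ on $\mathbb{F}_p$ yields the key identity
\[
\sum_{t\in\mathbb{F}_p}\lambda_{t\xi_0}=p\,n_0,\qquad n_0:=\#\{x\in E:\mathrm{Tr}(x\cdot\xi_0)=0\},
\]
so that $\sum_{t\in\mathbb{F}_p^\ast}\lambda_{t\xi_0}=p\,n_0-|E|$, the term $\lambda_0=|E|$ having been split off.

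For the first assertion I would engineer $n_0$ to be as far from $|E|/p$ as possible. Let $T_0=\{x\in S^{d-1}:\mathrm{Tr}(x\cdot\xi_0)=0\}$ be the slice of the sphere in the trace-hyperplane; expanding the indicator of $T_0$ via $\psi$ and invoking \cite[Lemma 5.1]{ir} to bound the nontrivial frequencies gives $|T_0|=(1+o(1))q^{d-1}/p$, whence $|S^{d-1}\setminus T_0|=(1+o(1))q^{d-1}(1-1/p)\ge m$ for $q$ large and $1\le m\ll q^{d-1}$. I would then take $E$ to be a symmetric subset of $S^{d-1}\setminus T_0$ of size $m$ (possible since $S^{d-1}\setminus T_0$ is stable under negation and $0\notin S^{d-1}$, adjusting by at most one point for parity). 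This forces $n_0=0$, so $\sum_{t\in\mathbb{F}_p^\ast}\lambda_{t\xi_0}=-|E|$, and by the pigeonhole principle over the $p-1$ nonzero frequencies,
\[
\mu\ge\max_{t\in\mathbb{F}_p^\ast}\big|\lambda_{t\xi_0}\big|\ge\frac{|E|}{p-1}\ge\frac{|E|}{2q^{\epsilon}},
\]
where the factor $2$ comfortably absorbs the parity adjustment; when $m\lesssim q^{d-1}/p$ one may instead place $E$ inside a single slice, giving the stronger $\mu=|E|$.

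For the second assertion I would run a fourth-moment (additive-energy) argument. Writing $Q(E)=\#\{(a,b,c,d)\in E^4:a+b=c+d\}$, orthogonality gives $\sum_{\xi}|\lambda_\xi|^2=q^d|E|$ and $\sum_{\xi}|\lambda_\xi|^4=q^d\,Q(E)$. The hypothesis $|E+E|\sim|E|$ forces near-maximal energy: by Cauchy--Schwarz $Q(E)\ge|E|^4/|E+E|\sim|E|^3$, while trivially $Q(E)\le|E|^3$, so $Q(E)\sim|E|^3$. Since $|E|\le|S^{d-1}|\sim q^{d-1}=o(q^d)$, the $\xi=0$ contribution $|\lambda_0|^4=|E|^4$ is lower order compared with $q^d|E|^3$, so that $\sum_{\xi\ne0}|\lambda_\xi|^4\sim q^d|E|^3$. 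Bounding this against the supremum,
\[
q^d|E|^3\sim\sum_{\xi\ne0}|\lambda_\xi|^4\le\mu^2\sum_{\xi\ne0}|\lambda_\xi|^2\le\mu^2\,q^d|E|,
\]
gives $\mu\gtrsim|E|$, which together with the trivial $\mu\le|E|=\lambda_0$ yields $\mu\sim\lambda_0=|E|$.

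The main obstacle is the slice estimate $|T_0|=(1+o(1))q^{d-1}/p$ — equivalently, the equidistribution of $S^{d-1}$ across the $p$ cosets of the trace-hyperplane — which reduces to a Gauss-sum evaluation twisted by the subfield trace and uses \cite[Lemma 5.1]{ir} to control the error $O(q^{(d-1)/2})$; this is harmless precisely in the regime where $q^{\epsilon}=o(q^{(d-1)/2})$. Near the top of the range, where $m$ approaches $|S^{d-1}|$ and $E$ can no longer avoid $T_0$, I would instead concentrate $E$ in as few slices as possible and bound $\max_{t\ne0}|\lambda_{t\xi_0}|$ from below through $p\sum_a n_a^2-|E|^2$, or simply restrict to $m\le(1-1/p)|S^{d-1}|$. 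In the second part the only care needed is to verify that the $\xi=0$ term is genuinely negligible, which is immediate from $|E|=o(q^d)$.
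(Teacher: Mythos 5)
Your proposal is correct, and it reaches the conclusion by a genuinely different, more self-contained route than the paper. For the first claim, the paper takes $H$ to be the union of translates of the hyperplane $x_d=0$ by an ``arithmetic progression'' $\mathcal{A}\subset\mathbb{F}_q$ of size $p^{r-1}$, chooses $E\subset S^{d-1}\setminus(H-H)$, and derives the eigenvalue bound from the expander mixing lemma: if $\mu<|E|/(2q^{\epsilon})$, then $e(H,H)\ge |E||H|^2/q^d-|E||H|/(2q^{\epsilon})>0$, contradicting $(H-H)\cap E=\emptyset$. Your excluded set is essentially the same object: with $\xi_0=e_d$ your slice $T_0$ is exactly $\{x\in S^{d-1}\colon x_d\in\ker\mathrm{Tr}\}$, and in fact the paper's $\mathcal{A}$ is best interpreted as an $\mathbb{F}_p$-subspace such as $\ker\mathrm{Tr}$, since a genuine arithmetic progression in characteristic $p$ has at most $p$ distinct elements, so your version of the construction is arguably the cleaner one. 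What differs is the derivation of the eigenvalue bound: orthogonality over the subfield plus pigeonhole over the $p-1$ frequencies $t\xi_0$ replaces the mixing lemma, and it even gives the slightly stronger $\mu\ge|E|/(p-1)$. One remark: the restriction $q^{\epsilon}=o(q^{(d-1)/2})$ you flag as an obstacle is not actually needed, because you only require $|S^{d-1}\setminus T_0|\ge m$, and since $p\ge 2$ the one-sided bound $|T_0|\le |S^{d-1}|/p+O(q^{(d-1)/2})\le (1/2+o(1))|S^{d-1}|$ already suffices for every $d\ge 2$; alternatively, the crude estimate $|T_0|\ll (q/p)\cdot q^{d-2}$, coming from the fact that each hyperplane meets $S^{d-1}$ in $O(q^{d-2})$ points, avoids Fourier analysis entirely (this is what the paper implicitly uses). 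For the second claim, both you and the paper start from $T_2(E)\ge|E|^4/|E+E|$; the paper then bounds $T_2(E)$ above by applying the mixing lemma to the multiset $E+E$, whereas you use the Parseval and fourth-moment identities directly. These are equivalent in substance (the mixing-lemma application is proved by exactly this spectral computation), but your version is shorter and makes transparent where $E\subset S^{d-1}$, hence $|E|=o(q^d)$, enters. Finally, both arguments share the same small caveats, which live in the statement rather than in either proof: $m\ll q^{d-1}$ must be read with a sufficiently small implied constant so that $E$ can avoid the excluded set, and a symmetric subset of $S^{d-1}$ in odd characteristic necessarily has even size, so odd $m$ requires the parity adjustment you mention.
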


Hence, it is not possible to apply techniques of pseudo-random graphs to prove such a result as Theorem \ref{thm-lowerbound}. Our main ingredient is a recent Ramsey-type result on the number of congruence copies of $2k$-\textit{spherical configurations} spanning $2k-2$ dimensions due to Lyall, Magyar, and Parshall in \cite{lyall}, which has been derived by  using a \textit{generalized von-Neumann type inequality} \cite[Proposition 6]{lyall} and \textit{an inverse theorem} \cite[Proposition 7]{lyall}.

It seems difficult to extend the approach of Theorem \ref{thm-lowerbound} for other subgraphs $H$. When $H$ is a $k$-simplex, say $k=2$ for simplicity, the inverse problem asks for conditions on three given proper subsets $E_1, E_2, E_3$ of $S^{d-1}$ such that there are three vertices $x, y, z\in \mathbb{F}_q^d$ such that $x-y\in E_1, y-z\in E_2, z-x\in E_3$. Note that $E_1, E_2, E_3$ can also be assumed to be subsets of spheres with different radii. We believe that finding a non-trivial solution of this problem would be much difficult compared to the original one.

When $E=S^{d-1}$, giving a lower bound on the number of cycles  in $C_{\mathbb{F}_q^d}(E)$ is much easier, since, as mentioned earlier, $C_{\mathbb{F}_q^d}(S^{d-1})$ is a pseudo-random graph with the second eigenvalue $\mu\sim \sqrt{|S^{d-1}|}$. In the next proposition, we provide an improvement of Theorem \ref{thm11} in terms of the lower bound on the number of cycles of even length.

\begin{proposition}\label{pro14} Suppose $E=S^{d-1}$, then the number of cycles of length $2k$ in $C_{\mathbb{F}_q^d}(S^{d-1})$ is $(1+o(1))|S^{d-1}|^{2k-1}q^{d-1}$. In addition, for any set $A\subset \mathbb{F}_q^d$ with $|A|\gg \min \{q^{\frac{d+1}{2}}, q^\frac{k}{k-1}\}$, the number of cycles of length $2k$ in $C_{\mathbb{F}_q^d}(E)$ with vertices in $A$ is at least $q^{-2k}|A|^{2k}$.
\end{proposition}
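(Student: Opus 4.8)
The plan is to count cycles through the adjacency matrix $M$ of $C_{\mathbb{F}_q^d}(S^{d-1})$, whose eigenvalues are $\lambda_m=\widehat{S^{d-1}}(m)$ with $\lambda_0=|S^{d-1}|$ and $|\lambda_m|\le\mu=(1+o(1))q^{(d-1)/2}$ for $m\neq 0$. For the first assertion I would first count \emph{closed walks} of length $2k$, whose number is $\mathrm{tr}(M^{2k})=\sum_m\lambda_m^{2k}$. Isolating the trivial eigenvalue gives the main term $\lambda_0^{2k}=|S^{d-1}|^{2k}$, and I would bound the remainder by $\sum_{m\neq 0}\lambda_m^{2k}\le\mu^{2k-2}\sum_{m\neq 0}\lambda_m^2\le\mu^{2k-2}\,\mathrm{tr}(M^2)=\mu^{2k-2}q^d|S^{d-1}|$. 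Since $\mu\sim q^{(d-1)/2}$ and $|S^{d-1}|\sim q^{d-1}$, a direct comparison of exponents shows the remainder is $o(|S^{d-1}|^{2k})$, so the number of closed walks is $(1+o(1))|S^{d-1}|^{2k}=(1+o(1))|S^{d-1}|^{2k-1}q^{d-1}$.

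Next I would pass from closed walks to cycles, which here means closed walks on $2k$ distinct vertices. Since $0\notin S^{d-1}$, consecutive vertices of a walk always differ, so any degenerate walk satisfies $v_i=v_j$ for some $|i-j|\ge 2$. Splitting at such a coincidence writes the walk as a concatenation of two shorter closed walks through a common base point; summing the per-vertex counts (each $\sim q^{-d}|S^{d-1}|^{\ell}$) over the $q^d$ base points shows every degenerate pattern contributes $O(q^{-d}|S^{d-1}|^{2k})$, hence $o(|S^{d-1}|^{2k})$ over the $O_k(1)$ patterns. This gives the claimed asymptotic.

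For the second assertion write $f=\mathbf{1}_A$ and $D_f=\mathrm{diag}(f)$; the number of closed walks of length $2k$ with all vertices in $A$ equals $\mathrm{tr}((D_fM)^{2k})=\mathrm{tr}(N^{2k})$ with $N=D_fMD_f$ symmetric. Decomposing $M=cJ+M'$, where $c=|S^{d-1}|/q^d$, $J$ is the all-ones matrix and $\|M'\|\le\mu$, gives $N=c\,ff^{\mathsf T}+N'$ with $\|N'\|\le\mu$, and the all-$J$ contribution is the main term $c^{2k}|A|^{2k}=(1+o(1))q^{-2k}|A|^{2k}$. One route is purely spectral: by Weyl's inequality $\lambda_{\max}(N)\ge c|A|-\mu$, so $\mathrm{tr}(N^{2k})\ge(c|A|-\mu)^{2k}\gg q^{-2k}|A|^{2k}$ as soon as $c|A|\gg\mu$, i.e. $|A|\gg q^{(d+1)/2}$. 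A second route, effective in the complementary range, decomposes each $C_{2k}$ into two length-$k$ paths joining a pair $u,w\in A$ and applies Cauchy--Schwarz, $\mathrm{tr}(N^{2k})=\sum_{u,w}P_k(u,w)^2\ge|A|^{-2}\big(\sum_{u,w}P_k(u,w)\big)^2$, where $P_k(u,w)$ is the number of length-$k$ walks from $u$ to $w$ with vertices in $A$; this reduces matters to a lower bound on a \emph{lower-order} length-$k$ count, which tolerates a smaller set, and a careful estimate (using in particular the orthogonality $JM'=M'J=0$, so that $M^k=c^k(q^d)^{k-1}J+(M')^k$ and the intermediate error collapses) yields the weaker threshold $|A|\gg q^{k/(k-1)}$. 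Taking whichever range applies produces the stated condition $|A|\gg\min\{q^{(d+1)/2},q^{k/(k-1)}\}$, and I would finally subtract degenerate walks, which by the same splitting contribute only a $1/|A|$ fraction and are absorbed into the constant.

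The hard part will be the second assertion's error analysis rather than the first. The naive single-$M'$ term in the expansion of $\mathrm{tr}((D_fM)^{2k})$ is only $o(\text{main})$ once $|A|\gg q^{(d+1)/2}$, so obtaining the genuinely smaller threshold $q^{k/(k-1)}$ cannot come from a term-by-term estimate and requires the more robust path-counting lower bound. The delicate point there is that the $D_f$-insertions break the clean orthogonality $JM'=M'J=0$ that one would like to exploit, so keeping the interleaved diagonal factors under control while extracting the exponent $k/(k-1)$ is where the main work lies.
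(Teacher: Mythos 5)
Your proof of the first assertion is correct, and it is essentially the paper's own computation in spectral packaging: since closed $2k$-walks based at a point are in bijection with $k$-energy tuples, your $\mathrm{tr}(M^{2k})$ is exactly the paper's $q^d\,T_k(S^{d-1})$, which the paper estimates by iterating Lemma \ref{do} (Corollary \ref{co11}) instead of summing eigenvalues, and your removal of degenerate walks plays the role of the paper's bounds on $T_k^{\mathtt{dep}},T_k^{0},T_k^{\mathtt{bad}}$ in the proof of Proposition \ref{confi}. One slip, harmless here: by vertex-transitivity the number of closed $\ell$-walks at a fixed vertex is $\mathrm{tr}(M^\ell)/q^d$, which for $\ell=2$ is the degree $|S^{d-1}|\sim q^{d-1}$, not $q^{-d}|S^{d-1}|^2\sim q^{d-2}$; hence patterns containing a $2$-loop contribute $O(|S^{d-1}|^{2k-1})$ rather than $O(q^{-d}|S^{d-1}|^{2k})$, which is still $o(|S^{d-1}|^{2k})$, so the conclusion stands.

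The genuine gap is route (ii) of your second assertion. The threshold $|A|\gg q^{k/(k-1)}$ on its own is not merely hard to reach; it is unreachable, because the statement it would prove is false. For the dimensions in play (any $d\ge 7$, in particular $d\ge 4k+2$), the form $x_1^2+\cdots+x_d^2$ has Witt index at least $3$, so $\mathbb{F}_q^d$ contains a totally isotropic subspace $W$ of dimension $3$: all differences of points of $W$ have zero norm, so $W$ is an independent set in $C_{\mathbb{F}_q^d}(S^{d-1})$ of size $q^3\gg q^{k/(k-1)}$ containing no edges, hence no cycles and no length-$k$ walks whatsoever. No error analysis around the $D_f$-insertions can circumvent this; it also shows that the ``$\min$'' in the statement must in effect be read as a ``$\max$''. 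Indeed, the paper's own proof is precisely your route (ii): its $N=\sum_{x,y}\binom{P(x,y)}{2}$ plus convexity is your Cauchy--Schwarz step, but the required lower bound on $\sum_{u,w}P_k(u,w)$ is imported from Theorem \ref{Mophat-Adidaphat}, whose hypothesis is $q^{(d+1)/2}=o(|A|)$; the condition $|A|\gg q^{k/(k-1)}$ enters only to make the diagonal terms (pairs of equal paths) negligible. So the paper proves the result only under both hypotheses, and your route (i), the Weyl bound $\lambda_{\max}(D_fMD_f)\ge c|A|-\mu$, is a clean, self-contained substitute in exactly that range; keep it and abandon the attempt at $q^{k/(k-1)}$ alone.

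A final caution applying to both of your routes (and, in disguised form, to the paper): in the second assertion you lower-bound closed walks, and your claim that walks with repeated vertices are ``a $1/|A|$ fraction'' is not justified near the threshold. For the pattern $v_0=v_2$, say, the degenerate count is controlled by sums like $\sum_{v\in A}\deg_A(v)\,(M^{2k-2})_{vv}$, and replacing the walk counts inside $A$ by walk counts in the whole graph already exceeds the main term $(|A|/q)^{2k}$ when $|A|\sim q^{(d+1)/2}$ and $k\ge 3$; one genuinely has to exploit that the intermediate vertices lie in $A$. The paper has the same lacuna, since pairs of length-$k$ paths sharing interior vertices do not yield cycles with distinct vertices, so this step needs real work in any complete write-up.
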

Based on Proposition \ref{pro14} and in the spirit of Theorem \ref{thm11}, we conjecture that for any set $A\subset \mathbb{F}_q^d$ with $|A|\gg \min \{q^{\frac{d+1}{2}}, q^\frac{k}{k-1}\}$, the number of cycles of length $2k$ in $C_{\mathbb{F}_q^d}(S^{d-1})$ with vertices in $A$ is equal to  $(1+o(1))q^{-2k}|A|^{2k}$.
\section{Preliminaries}
Let $\chi\colon \mathbb{F}_q\to \mathbb{S}^1$ be the canonical additive character. For example, if $q$ is a prime number, then $\chi(t)=e^{\frac{2\pi i t}{q}}$, if $q=p^n$, then we set $\chi(t)=e^{\frac{2\pi i \mathtt{Tr}(t)}{q}}$, where $\mathtt{Tr}\colon \mathbb{F}_q\to \mathbb{F}_q$  is the trace function defined by $\mathtt{Tr}(x):=x+x^p+\cdots+x^{p^{n-1}}$. 

We recall the orthogonal property of $\chi$: for any $x\in \mathbb{F}_q^d$, $d\ge 1$, 
\[\sum_{m\in \mathbb{F}_q^d}\chi(x\cdot m)=\begin{cases}0~~\mathtt{if}~x\ne (0, \ldots, 0) \\ q^d ~\mathtt{if}~x=(0, \ldots, 0)\end{cases},\]
where $x\cdot m=x_1m_1+\cdots+x_dm_d$. 

For any $x\in \mathbb{F}_q^d$, through this paper, we define $||x||=x_1^2+\cdots+x_d^2$. 

Given a set $E\subset \mathbb{F}_q^d$, we identify $E$ with its indicator function $1_E$. The Fourier transform of $E$ is defined by 
\[\widehat{E}(m):=\sum_{x\in \mathbb{F}_q^d}E(x)\chi(-x\cdot m).\]
Let $E$ be a set in $\mathbb{F}_q^d$, and $k$ be a positive integer. The $k$--additive energy of $E$, denoted by $T_k(E)$, is defined by
\[T_k(E):=\#\left\lbrace (a_1, \ldots, a_k, b_1\ldots, b_k)\in E^{2k}\colon a_1+\cdots+a_k=b_1+\cdots+b_k\right\rbrace.\]
We call such a tuple $(a_1, \ldots, a_k, b_1, \ldots, b_k)$ \textit{$k$-energy tuple}.
 
A $k$-energy tuple $(a_1, \ldots, a_k, b_1, \ldots, b_k)\in \left(\mathbb{F}_q^d\right)^{2k}$ is called \textit{good} if for any two sets of indices $I, J\subset \{1, \ldots, k\}$, we have $\sum_{i\in I}a_i-\sum_{j\in J}b_j\ne 0$. We denote the number of good $k$-energy tuples with vertices in $E$ by $T_k^{\mathtt{good}}(E)$.
 
In the next lemma, we show that for every vertex $v\in \mathbb{F}_q^d$, the number of cycles of length $2k$ with distinct vertices going through $v$ is at least $T_k^{\mathtt{good}}(E)$.
\begin{lemma}\label{con}
For any $k\ge 2$ and any $v\in \mathbb{F}_q^d$, the number of cycles of length $2k$ in $C_{\mathbb{F}_q^d}(E)$ with distinct vertices going through $v$ is at least $T_k^{\mathtt{good}}(E)$.
\end{lemma}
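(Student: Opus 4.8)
The plan is to exhibit an explicit injection from good $k$-energy tuples with entries in $E$ into the set of $2k$-cycles through $v$, so that a lower bound on the number of the former bounds the number of the latter. Given a good tuple $(a_1,\ldots,a_k,b_1,\ldots,b_k)\in E^{2k}$, I would build a closed walk by partial sums: set $v_0=v$, let $v_i=v+a_1+\cdots+a_i$ for $1\le i\le k$, and then $v_{k+j}=v_k-(b_1+\cdots+b_j)$ for $1\le j\le k$. The first thing to verify is that this is a genuine closed walk in $C_{\mathbb{F}_q^d}(E)$: the consecutive differences are $v_i-v_{i-1}=a_i\in E$ along the first arc and $v_{k+j}-v_{k+j-1}=-b_j$ along the second, and since $E$ is symmetric we also have $-b_j\in E$, so every step is an edge; the walk closes because $v_{2k}=v+\sum_i a_i-\sum_j b_j=v$, which is exactly the energy relation $\sum_i a_i=\sum_j b_j$.

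The substantive step, where the \emph{good} hypothesis does the real work, is checking that $v_0,\ldots,v_{2k-1}$ are pairwise distinct, so that the closed walk is an honest cycle. Each partial sum $v_i-v$ has the form $\sum_{i\in I}a_i-\sum_{j\in J}b_j$ for a \emph{prefix} pair $(I,J)$ (a prefix block of $a$'s, possibly followed by a prefix block of $b$'s), and so every pairwise difference $v_i-v_j$ telescopes to a single signed subset sum $\sum_{i\in I}a_i-\sum_{j\in J}b_j$ with $I,J\subseteq\{1,\ldots,k\}$. Tracking the three regimes (both indices on the first arc, one on each arc, both on the second arc) shows the resulting $(I,J)$ is never the empty pair and never the full pair. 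The goodness of the tuple says precisely that all such nontrivial signed subset sums are nonzero, hence no two of the $2k$ vertices collide.

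Finally I would argue injectivity: from the ordered vertex sequence $(v_0=v,v_1,\ldots,v_{2k-1})$ one recovers $a_i=v_i-v_{i-1}$ and $b_j=v_{k+j-1}-v_{k+j}$, reconstructing the tuple uniquely, so distinct good tuples produce distinct cycles based at $v$. This yields at least $T_k^{\mathtt{good}}(E)$ such cycles. The construction is routine once the ordering is fixed; the only genuinely delicate points are confirming that the family of partial-sum differences arising around the cycle is exactly the family of signed subset sums controlled by the good condition, and in particular that the single forced coincidence $v_{2k}=v_0$ corresponds to the full pair that is \emph{excluded} from the good condition, so it does not spuriously break distinctness while all other differences are guaranteed nonzero.
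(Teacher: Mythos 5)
Your proof is correct and takes essentially the same approach as the paper: both construct, from each good $k$-energy tuple, the cycle $v,\ v+a_1,\ \ldots,\ v+a_1+\cdots+a_k,\ v+\sum_{i=1}^k a_i-b_1,\ \ldots,\ v+\sum_{i=1}^k a_i-\sum_{i=1}^{k-1}b_i$ and invoke the goodness condition to ensure the $2k$ vertices are pairwise distinct. Your write-up is in fact more careful than the paper's terse proof, spelling out the edge-membership via symmetry of $E$, the reduction of pairwise vertex differences to nontrivial signed subset sums, and the injectivity of the tuple-to-cycle map, all of which the paper leaves implicit.
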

\begin{proof}
For each good $k$-energy tuple $(a_1, \ldots, a_k, b_1, \ldots, b_k)\in E^{2k}$, we consider the following cycle of length $2k$ in $C_{\mathbb{F}_q^d}(E):$
\[v, v+a_1, v+a_1+a_2, \ldots, v+a_1+\cdots+a_k, v+\sum_{i=1}^ka_i-b_1, \cdots, v+\sum_{i=1}^ka_i-\sum_{i=1}^{k-1}b_i.\]
We observe that in this cycle, each vertex appears only one time since the $k$-energy tuple is good. So, for each vertex $v$, there are at least $T_k^{\mathtt{good}}(E)$ cycles with distinct vertices passing through $v$. 
\end{proof}
We also recall the well--known Expanding mixing lemma for regular graphs. We refer the reader to \cite{hanson, expander} for proofs.
\begin{lemma}\label{exp} Let $\mathcal{G}$ be a regular graph with $n$ vertices of degree $d$. Suppose that the second eigenvalue of $\mathcal{G}$ is at most $\mu$, then for any two vertex sets $U$ and $W$ in $\mathcal{G}$, the number of edges between $U$ and $W$, denoted by $e(U, W)$, satisfies 
\[\left\vert e(U, W)-\frac{d|U||W|}{n}\right\vert\le \mu |U|^{1/2}|W|^{1/2}.\] 
When $U$ and $W$ are multi-sets, we also have 
\[\left\vert e(U, W)-\frac{d|U||W|}{n}\right\vert\le \mu \left(\sum_{u\in \overline{U}}m(u)^2\right)^{1/2}\cdot \left(\sum_{w\in \overline{W}}m(w)^2\right)^{1/2},\]
where $\overline{X}$ is the set of distinct elements in $X$, and $m(x)$ is the multiplicity of $x$.  
\end{lemma}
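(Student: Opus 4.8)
The plan is to run the standard spectral argument on the adjacency matrix of $\mathcal{G}$, since the quantity $e(U,W)$ is really the quadratic form $\mathbf{1}_U^{T}A\,\mathbf{1}_W$ and its deviation from the expected value is governed by the non-principal spectrum. Let $A$ be the adjacency matrix of $\mathcal{G}$. Because $\mathcal{G}$ is $d$-regular and undirected, $A$ is a real symmetric matrix for which the all-ones vector is an eigenvector with eigenvalue $d$. By the spectral theorem I would fix an orthonormal eigenbasis $v_1,\ldots,v_n$ with $v_1=\mathbf{1}/\sqrt{n}$ and eigenvalues $\lambda_1=d\ge \lambda_2\ge\cdots\ge\lambda_n$; the hypothesis that the second eigenvalue is at most $\mu$ means precisely that $|\lambda_i|\le \mu$ for every $i\ge 2$.

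First I would write $e(U,W)=\mathbf{1}_U^{T}A\,\mathbf{1}_W$, where $\mathbf{1}_U,\mathbf{1}_W$ are the indicator vectors of $U$ and $W$. Expanding these in the eigenbasis as $\mathbf{1}_U=\sum_i \alpha_i v_i$ and $\mathbf{1}_W=\sum_i \beta_i v_i$, the coefficient along $v_1$ is $\alpha_1=\langle \mathbf{1}_U,v_1\rangle=|U|/\sqrt{n}$ and similarly $\beta_1=|W|/\sqrt{n}$. Since $Av_i=\lambda_i v_i$ and the $v_i$ are orthonormal, this yields $e(U,W)=\sum_i \lambda_i\alpha_i\beta_i=\frac{d|U||W|}{n}+\sum_{i\ge 2}\lambda_i\alpha_i\beta_i$, so the deviation from the main term is controlled entirely by the tail of the spectrum.

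Next I would estimate that tail. Using $|\lambda_i|\le\mu$ for $i\ge 2$ followed by the Cauchy--Schwarz inequality, I obtain
\[
\left|\,e(U,W)-\frac{d|U||W|}{n}\,\right|=\left|\sum_{i\ge 2}\lambda_i\alpha_i\beta_i\right|\le \mu\left(\sum_{i\ge 2}\alpha_i^2\right)^{1/2}\left(\sum_{i\ge 2}\beta_i^2\right)^{1/2}.
\]
Finally, Parseval's identity gives $\sum_{i\ge 2}\alpha_i^2\le \sum_i \alpha_i^2=\|\mathbf{1}_U\|^2=|U|$ and likewise $\sum_{i\ge 2}\beta_i^2\le |W|$, which delivers the first bound $\mu|U|^{1/2}|W|^{1/2}$.

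For the multiset version the argument is identical once I replace the $0/1$ indicator vectors by the multiplicity vectors whose entry at a vertex $x$ equals its multiplicity $m(x)$ in $U$ (respectively $W$); then $e(U,W)=\sum_{x,y}m(x)m(y)A_{xy}$ is again the quadratic form $\mathbf{1}_U^{T}A\,\mathbf{1}_W$, the principal component still produces the main term $d|U||W|/n$ because $\alpha_1=\tfrac{1}{\sqrt{n}}\sum_x m(x)=|U|/\sqrt{n}$, and Parseval now reads $\sum_{i\ge 2}\alpha_i^2\le \|\mathbf{1}_U\|^2=\sum_{x\in\overline{U}}m(x)^2$, giving the stated estimate. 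I do not expect a genuine obstacle here; the only points requiring care are the bookkeeping of the multiplicity normalization so that the $v_1$-component reproduces the correct main term, and the observation that the paper's quantity $\mu=\max_{m\ne 0}|\lambda_m|$ is exactly the uniform bound on all non-principal eigenvalues invoked above.
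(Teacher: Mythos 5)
Your proof is correct and complete: the spectral decomposition of the quadratic form $\mathbf{1}_U^{T}A\,\mathbf{1}_W$, extraction of the main term from the principal eigenvector, and Cauchy--Schwarz plus Parseval on the tail is exactly the canonical argument, and you correctly handle the two points that need care (interpreting ``second eigenvalue at most $\mu$'' as a bound on all non-principal eigenvalues in absolute value, matching the paper's definition $\mu=\max_{m\ne 0}|\lambda_m|$, and replacing indicator vectors by multiplicity vectors in the multiset case, where Parseval gives $\sum_{x\in\overline{U}}m(x)^2$). The paper itself does not prove this lemma but defers to its references (Krivelevich--Sudakov and Hanson--Lund--Roche-Newton), whose proofs are essentially the argument you gave, so your proposal faithfully reconstructs the omitted proof rather than deviating from it.
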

\section{Proof of Theorem \ref{thm-lowerbound}}
Theorem \ref{thm-lowerbound} follows directly from Lemma \ref{con} and the following lower bound for $T_k^{\mathtt{good}}(E)$. 
\begin{theorem}\label{kenergy}
Suppose $E$ satisfies assumptions of Theorem \ref{thm-lowerbound}, we have 
\[T_k^{\mathtt{good}}(E)\ge c(\alpha, k)q^{\frac{(2k-1)d-4k}{2}}.\]
\end{theorem}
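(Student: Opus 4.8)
The plan is to reduce Theorem~\ref{kenergy} to a counting statement about spherical configurations and then feed it into the result of Lyall, Magyar and Parshall \cite{lyall}. The key device is a dictionary between good $k$-energy tuples and non-degenerate spherical configurations. Given a tuple $(a_1,\dots,a_k,b_1,\dots,b_k)\in E^{2k}$ with $a_1+\cdots+a_k=b_1+\cdots+b_k$, I would regard the $2k$ points $a_1,\dots,a_k,b_1,\dots,b_k$ as lying on $S^{d-1}$. The energy relation is exactly the affine dependency with coefficient vector $(1,\dots,1,-1,\dots,-1)$, whose coefficients sum to zero; hence these $2k$ points always lie in an affine subspace of dimension at most $2k-2$. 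First I would verify the implication I actually need: if the $2k$ points affinely span the full $(2k-2)$ dimensions, then the tuple is automatically good. Indeed, a proper sub-relation $\sum_{i\in I}a_i=\sum_{j\in J}b_j$ with $|I|=|J|$ would be a second affine dependency, while one with $|I|\neq|J|$ would be a genuine \emph{linear} dependency (its coefficients sum to $|I|-|J|\neq 0$) among points on a sphere not passing through the origin; since for such points the linear span exceeds the affine span by exactly one, either case drops the affine dimension below $2k-2$. Consequently, every copy of a fixed non-degenerate $(2k-2)$-dimensional spherical configuration is a good tuple, and counting such copies bounds $T_k^{\mathtt{good}}(E)$ from below.

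Next I would fix one admissible model configuration $\Delta$: a set of $2k$ distinct points on $S^{d-1}$ spanning $2k-2$ dimensions whose unique affine dependency is $(1,\dots,1,-1,\dots,-1)$, chosen to be non-degenerate in the sense that its Gram data has full rank on its span. For $q$ large such a $\Delta$ is produced from a generic frame of $2k-1$ linearly independent unit vectors, the remaining point being forced by the dependency and lying on $S^{d-1}$ automatically. With $\Delta$ in hand, the theorem of Lyall, Magyar and Parshall \cite{lyall} supplies, for $d\ge 4k+2$ and every $E\subset S^{d-1}$ with $|E|\ge\alpha q^{d-1}$, at least $c(\alpha,k)\,q^{\frac{(2k-1)d-4k}{2}}$ congruent copies of $\Delta$ inside $E$. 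Since every orthogonal image of $\Delta$ again satisfies $\sum a_i=\sum b_j$ and remains non-degenerate, each such copy is a good $k$-energy tuple, whence $T_k^{\mathtt{good}}(E)$ is at least the number of these copies, i.e. $T_k^{\mathtt{good}}(E)\ge c(\alpha,k)\,q^{\frac{(2k-1)d-4k}{2}}$. Combined with Lemma~\ref{con}, this proves Theorem~\ref{thm-lowerbound}.

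I expect the main obstacle to lie in the passage between ``congruent copy'', that is, equal pairwise distances, which is the form in which \cite{lyall} is naturally stated, and ``copy realizing the labelled additive relation $\sum a_i=\sum b_j$''. Over $\mathbb{F}_q$ this relies on a Witt-type statement that two configurations with the same distance matrix differ by an orthogonal transformation, which is linear and therefore preserves the affine dependency and the additive relation; making this rigorous requires the model $\Delta$ to be non-degenerate and the ambient dimension to comfortably exceed twice the dimension of $\Delta$, and here the hypothesis $d\ge 4k+2$ is what guarantees it over all of $\mathbb{F}_q$ (including the presence of isotropic directions). A secondary point to check is that non-degeneracy is stable under the isometry, so that copies genuinely span $2k-2$ dimensions and are thus good in the sense of the first paragraph; and that $\Delta$ can be realised on $S^{d-1}\subset\mathbb{F}_q^d$ with the prescribed dependency in a way compatible with the admissibility hypotheses of \cite{lyall}, which for large $q$ is a routine dimension count. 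Everything else --- the reduction through Lemma~\ref{con} and the bookkeeping of the exponent --- is then straightforward.
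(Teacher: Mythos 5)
There is a genuine gap, and it is quantitative: you misquote the Lyall--Magyar--Parshall bound. Their theorem, applied to a single fixed non-degenerate configuration of $2k$ points spanning $2k-2$ dimensions (i.e.\ their parameter equal to $2k-2$), yields at least $c(\alpha,k)\,q^{\frac{(2k-1)d-(2k-1)(2k)}{2}}$ congruent copies of that one configuration in $E$ --- not $c(\alpha,k)\,q^{\frac{(2k-1)d-4k}{2}}$ as you assert. Since $(2k-1)(2k)-4k = 4k^2-6k > 0$ for $k\ge 2$, your single-configuration count falls short of the target by a factor of exactly $q^{2k^2-3k}$. Everything in your argument downstream of that misquotation (including the reduction to Lemma~\ref{con} and the ``bookkeeping of the exponent'' you call straightforward) therefore does not close.

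The paper fills precisely this deficit, and doing so is the bulk of its work. Instead of one model configuration, it considers the set $Q$ of \emph{distinct congruence classes} of admissible spherical configurations encoding good $k$-energy tuples, proves that $|Q|\gg q^{2k^2-3k}$ (Proposition~\ref{confi}), and then sums the per-class Lyall--Magyar--Parshall bound over all classes, which recovers the missing factor: $q^{\frac{(2k-1)d-(2k-1)(2k)}{2}}\cdot q^{2k^2-3k}=q^{\frac{(2k-1)d-4k}{2}}$. The class-counting itself is nontrivial: it requires showing that almost all $k$-energy tuples on $S^{d-1}$ are non-degenerate, span $2k-2$ dimensions and are good (via sphere energy estimates, Fourier bounds, and the expander mixing lemma), and then converting the count of such tuples into a count of congruence classes via an orbit--stabilizer argument using $|O(n,\mathbb{F}_q)|\sim q^{\binom{n}{2}}$ and a Cauchy--Schwarz inequality. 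Your first-paragraph dictionary (full affine span of the $2k$ points forces goodness, since any sub-relation would be a second independent dependency) is a sound observation and is consistent with how the paper sets up $Q$; but without an analogue of the class-counting step, the approach cannot reach the stated exponent.
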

In the rest of this section, we focus on proving Theorem \ref{kenergy}.

For each $j\ne 0$, let $S_j^{d-1}(x)$ be the sphere centered at $x\in \mathbb{F}_q^d$ of radius $j$. For the sake of simplicity, we write $S_j^{d-1}$ for $S_j^{d-1}(0, \ldots, 0)$, and $S^{d-1}$ for $S_1^{d-1}(0, \ldots, 0)$.
 
\begin{definition}
Let $X\subset \mathbb{F}_q^d$ be a configuration. We say that $X$ is spherical if $X\subset S_1^{d-1}(x)$ for some $x\in \mathbb{F}_q^d$. If $\dim (\mathtt{Span}(X-X))=k$, then we say $X$ spans $k$ dimensions. 
\end{definition}
The following result is our key ingredient in the proof of Theorem \ref{kenergy}.
\begin{theorem}[Lyall-Magyar-Parshall, \cite{lyall}]\label{lyalll}
Let $d, k\in \mathbb{N}$ with $d\ge 2k+6$, $\alpha\in (0, 1)$ and $q\ge q(\alpha, k)$. For $E\subset S^{d-1}$ with $|E|\ge \alpha q^{d-1}$, then $E$ contains at least $c(\alpha, k) q^{\frac{(k+1)d-(k+1)(k+2)}{2}}$ isometric copies of every non-degenerate $(k+2)$-point spherical configuration spanning $k$ dimensions.
\end{theorem}

This theorem says that for any $\alpha\in (0, 1)$ and any fixed non-degenerate $(k+2)$-point spherical configuration $X$ spanning $k$ dimensions, there exists $q_0=q_0(\alpha, k)$ which is large enough, such that for any $E\subset S^{d-1}\subset \mathbb{F}_q^d$ with $|E|\ge \alpha q^{d-1}$ and $q\ge q_0$, $E$ contains many isometric copies of $X$.  More precisely, let
\[X=\{\mathbf{0}, v_1, \ldots, v_k, a_1v_1+\cdots+a_kv_k\},\]
where $\mathbf{0}=(0, \ldots, 0), ~v_1, \ldots, v_k\in \mathbb{F}_q^d$ are linearly independent vectors, and $a_1, \ldots, a_k\in \mathbb{F}_q$, be a non-degenerate spherical configuration of $k+2$ points in $\mathbb{F}_q^d$ that spans a $k$-dimensional vector space. By non-degenerate, we meant that $\{\mathbf{0}, v_1, \ldots, v_k\}$ form a $k$--simplex with all non-zero side-lengths.  Assume that $E\subset S^{d-1}$ satisfying the conditions of Theorem \ref{lyalll}, then $E$ contains at least $c(\alpha, k)q^{\frac{(k+1)d-(k+1)(k+2)}{2}}$ copies of $X$ of the form 
\[X'=\{x_0, x_0+x_1, \ldots, x_0+x_k, x_0+a_1x_1+\cdots+a_kx_k\},\]
with $x_1, \ldots, x_k$ linearly independent such that $x_i\cdot x_j=v_i\cdot v_j$ for $1\le i\le j\le k$.

We recall that two configurations $X$ and $X'$ in $S^{d-1}$ are said to be in the same congruence class if there exists $g\in O(d, \mathbb{F}_q)$, the orthogonal group in $\mathbb{F}_q^d$, such that $g(X)=X'$. 

Let $Q$ be the set of distinct congruence classes of spherical configurations $X$ of the form 
\[X=\{x_0, x_0+x_1, x_0+x_2, \ldots, x_0+x_{2k-2}, x_0+\sum_{i=1}^{2k-2}(-1)^{i+1}(x+x_i)\},\]
satisfying
\begin{itemize}
\item $\{x_1, \ldots, x_{2k-2}\}$ are linearly independent. 
\item $||x_i-x_j||\ne 0$, $||x_i||\ne 0$ for all $1\le i\ne  j\le 2k-2$.
\item $X$ forms a good $k$-energy tuple. 
\end{itemize}
We note that vectors in $X\in Q$ form a $k$-energy tuple since 
\[x_0+(x_0+x_1)+(x_0+x_3)+\cdots+(x_0+x_{2k-3})=(x_0+x_2)+(x_0+x_4)+\cdots+(x_0+x_{2k-2})+u,\]
where $u=x_0+\sum_{i=1}^{2k-2}(-1)^{i+1}(x_0+x_i)$. 

For each $X\in Q$, let $N(X)$ be the number of congruent copies of $X$ in $E$. Set $N(Q)=\sum_{X\in Q}N(X)$. The next lemma gives us a lower bound for $T_k^{\mathtt{good}}(E)$.
\begin{lemma}\label{step1}
Suppose $E$ satisfies assumptions of Theorem \ref{thm-lowerbound}, we have 
\begin{equation}\label{eq10*}T_k^{\mathtt{good}}(E)\ge N(Q).\end{equation}
\end{lemma}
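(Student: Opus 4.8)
The plan is to prove \eqref{eq10*} by constructing an injection $\Phi$ from the disjoint union, over all congruence classes $X\in Q$, of the congruent copies of $X$ inside $E$ into the set of good $k$-energy tuples supported on $E$. Once $\Phi$ is shown to be well-defined and injective, its domain has size $\sum_{X\in Q}N(X)=N(Q)$ and its codomain has size $T_k^{\mathtt{good}}(E)$, so the inequality $N(Q)\le T_k^{\mathtt{good}}(E)$ follows at once. Concretely, fix a representative $X=\{x_0,x_0+x_1,\ldots,x_0+x_{2k-2},u\}$ with $u=x_0+\sum_{i=1}^{2k-2}(-1)^{i+1}(x_0+x_i)$, and recall the identity already displayed before the lemma, which shows that the $2k$ labelled points split into two blocks of $k$ points with equal sums, i.e. they form a $k$-energy tuple when listed in the canonical order
\[\bigl(x_0,\ x_0+x_1,\ x_0+x_3,\ \ldots,\ x_0+x_{2k-3};\ x_0+x_2,\ x_0+x_4,\ \ldots,\ x_0+x_{2k-2},\ u\bigr).\]

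I would then define $\Phi$ on a congruent copy of $X$ in $E$, realized as a labelled configuration $\{y_0,y_0+y_1,\ldots,y_0+y_{2k-2},u'\}\subset E$ (with $u'$ obtained from the same alternating relation and the labelling inherited from $X$ via the acting element of $O(d,\mathbb{F}_q)$), by reading off its points in exactly the order above. The image $\Phi(\text{copy})$ is a good $k$-energy tuple in $E^{2k}$: membership in $E^{2k}$ is immediate since all $2k$ points of the copy lie in $E$; the equal-sum ($k$-energy) property is precisely the displayed identity applied to the copy; and the good property is one of the three defining conditions of $Q$, which is a congruence invariant and hence passes to every copy. This settles well-definedness.

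The crux is injectivity, and here the key observation is that the canonical ordering can be inverted explicitly. Given an output tuple $(a_1,\ldots,a_k,b_1,\ldots,b_k)$ one recovers the labelled copy deterministically via
\[y_0=a_1,\qquad y_{2j-1}=a_{j+1}-a_1,\qquad y_{2j}=b_j-a_1\ \ (1\le j\le k-1),\qquad u'=b_k.\]
Thus two copies with the same image must share all of $y_0,y_1,\ldots,y_{2k-2},u'$, hence coincide; and copies belonging to two different classes $X\ne X'$ of $Q$ necessarily represent distinct congruence classes, so they are distinct labelled point sets and cannot share an image either. Therefore $\Phi$ is injective on the whole disjoint union, yielding $N(Q)=\sum_{X\in Q}N(X)\le T_k^{\mathtt{good}}(E)$.

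The only real obstacle I anticipate is bookkeeping rather than any estimate: one must match the parity-based partition of the indices $\{1,\ldots,2k-2\}$ correctly so that the reconstruction formulas are unambiguous, and one must be careful that summing $N(X)$ over the \emph{distinct} congruence classes in $Q$ introduces no hidden collisions. Both points are handled cleanly by the explicit inverse map above, so no further analytic input is needed for this lemma; the genuine quantitative work—namely the lower bound for $N(Q)$ obtained from Theorem \ref{lyalll}—is deferred to the subsequent steps.
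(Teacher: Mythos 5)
Your proof is correct and takes essentially the same route as the paper: both read each congruent copy of a configuration in $Q$ in the canonical alternating order to produce a good $k$-energy tuple in $E^{2k}$, with the equal-sum identity and the goodness property inherited under the $O(d,\mathbb{F}_q)$ action. The only difference is that you make the injectivity step explicit via the inverse reconstruction formulas, a point the paper's proof leaves implicit when it simply counts $N(X)$ tuples per class and sums over $Q$.
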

\begin{proof}
Let \[X=\{x_0, x_0+x_1, x_0+x_2, \ldots, x_0+x_{2k-2}, x_0+\sum_{i=1}^{2k-2}(-1)^{i+1}(x_0+x_i)\}\in Q,\]
and set $u=x_0+\sum_{i=1}^{2k-2}(-1)^{i+1}(x_0+x_i)$, then we have 
\[x_0+(x_0+x_1)+(x_0+x_3)+\cdots+(x_0+x_{2k-3})=(x_0+x_2)+(x_0+x_4)+\cdots+(x_0+x_{2k-2})+u,\]
which provides a good $k$-energy tuple. Notice that $x_0+x_i\ne x_0+x_j$ for all pairs $(i, j)$, and $u\ne x_0, x_0+x_i$ for all $i$. Since the additive energy is invariant under the action of orthogonal matrices,  we have $N(X)$ good $k$-energy tuples in $E$. Summing over all $X$, we have $N(Q)$ good $k$-energy tuples in $E$.
\end{proof}
In the form of Lemma \ref{step1},  in order to complete the proof of Theorem \ref{kenergy}, we have to find a lower bound for $N(Q)$, which will be followed by a lower bound of $|Q|$ and Theorem \ref{lyalll}. The following proposition plays an important role for this step.
\begin{proposition}\label{confi} For $d\ge \max\{ 2k-2, 4\}$ and $k\ge 2$, we have $|Q|\gg q^{2k^2-3k}$. 
\end{proposition}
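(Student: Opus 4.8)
The plan is to identify each congruence class in $Q$ with the mutual inner-product data of its points, and then to count how many such data are simultaneously admissible and realizable by honest vectors in $\mathbb{F}_q^d$. Since every $g\in O(d,\mathbb{F}_q)$ preserves the form $x\cdot y$, it preserves the whole Gram matrix of an (ordered) configuration, so two configurations with different Gram matrices cannot be congruent. Up to the $O(k)$ relabelings coming from the dihedral symmetry of the $2k$-cycle, the congruence class is therefore determined by, and determines, the symmetric array $g_{ij}:=x_i\cdot x_j$ for $1\le i\le j\le 2k-2$ together with the quantities $x_0\cdot x_0$ and $x_0\cdot x_i$. Hence it suffices to produce $\gg q^{2k^2-3k}$ distinct admissible values of $(g_{ij})$, each arising from a genuine configuration, since distinct data yield non-congruent configurations and the relabeling factor only affects the implied constant.

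First I would record the sphere constraints in terms of $(g_{ij})$. Writing $p_0=x_0$, $p_i=x_0+x_i$, and $p_{2k-1}=x_0+\sum_{i=1}^{2k-2}(-1)^{i+1}(x_0+x_i)$, the requirement that all $2k$ points lie on a common unit sphere forces $x_0\cdot x_0=1$ and, from $\|p_i\|=1$, the relations $x_0\cdot x_i=-g_{ii}/2$. Using $\sum_{i=1}^{2k-2}(-1)^{i+1}=0$, the last point simplifies to $p_{2k-1}=x_0+\sum_i(-1)^{i+1}x_i$, and expanding $\|p_{2k-1}\|$ with $x_0\cdot x_i=-g_{ii}/2$ shows that the condition $\|p_{2k-1}\|=1$ is equivalent to the single \emph{linear} equation $2\sum_{i\ \mathrm{even}}g_{ii}+\sum_{i\ne j}(-1)^{i+j}g_{ij}=0$ in the $g_{ij}$. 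This equation is nontrivial for odd $q$ (the coefficient of $g_{12}$ is $-2$), so it cuts the $\binom{2k-1}{2}=2k^2-3k+1$ free entries down to a $(2k^2-3k)$-dimensional affine family.

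Next I would realize a generic point of this family by actual vectors. For $d\ge 2k-1$, the theory of symmetric bilinear forms over $\mathbb{F}_q$ (with $q$ odd, essentially every symmetric matrix of rank $\le d$ is a Gram matrix for the standard form, via Witt's extension theorem) lets me choose linearly independent $x_1,\ldots,x_{2k-2}$ with the prescribed inner products for a positive proportion of $(g_{ij})$. The orthogonal complement $W^\perp$ of the span $W$ of $x_1,\ldots,x_{2k-2}$ has dimension $d-(2k-2)\ge 1$, so the remaining condition $\|x_0\|=1$ can be met by adjusting the $W^\perp$-component of $x_0$ (representing one value of a quadratic form in $\ge 1$ variables) without imposing any further relation among the $g_{ij}$. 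The auxiliary non-degeneracy requirements defining $Q$ --- linear independence of the $x_i$, $\|x_i\|\ne 0$, $\|x_i-x_j\|\ne 0$, and the good $k$-energy condition --- are Zariski-open, each failing only on a proper subvariety and thus excluding at most $o(q^{2k^2-3k})$ of the admissible data, once one verifies that none of them is identically violated by exhibiting a single good configuration. Counting the surviving data then gives $|Q|\gg q^{2k^2-3k}$.

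I expect the realizability step to be the main obstacle. The combinatorial bookkeeping and the single linear relation from $\|p_{2k-1}\|=1$ are routine, but guaranteeing that a prescribed symmetric array is the Gram matrix of genuine linearly independent vectors lying on the unit sphere of $\mathbb{F}_q^d$, and that the last coordinate of $x_0$ can be chosen on the sphere, relies on point counts for quadrics over $\mathbb{F}_q$ and on matching discriminant (Hasse-type) invariants. This is precisely where the dimension hypothesis is used: the argument is cleanest once $d\ge 2k-1$, so that $W^\perp\ne 0$ and $\|x_0\|=1$ costs no further parameter, while the borderline case needs extra care to ensure the quadric carving out $x_0$ is nonempty. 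In the regime of Theorem~\ref{thm-lowerbound}, where $d\ge 4k+2$, there is ample room and the realizability is unobstructed.
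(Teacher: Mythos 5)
Your proposal takes a genuinely different route from the paper. The paper never parametrizes congruence classes by Gram data; it counts $\mathbb{F}_q$-points. It first shows, via Fourier-analytic estimates (Lemma \ref{do}, Corollary \ref{co11}, Lemmas \ref{adidaphat} and \ref{xyz}) and the mixing bound of Lemma \ref{exp}, that the number $T_k^{*}(S^{d-1})$ of $k$-energy tuples on the sphere satisfying the non-degeneracy conditions (a)--(c) is $(1+o(1))|S^{d-1}|^{2k-1}/q$; it then converts this mass into a count of congruence classes by a Cauchy--Schwarz orbit argument, bounding $\sum_{X}\mu(X)^2\le |O(d,\mathbb{F}_q)|\,T_k^{*}(S^{d-1})/|O(d-2k+1)|$ with the stabilizer bound for non-degenerate simplices from \cite{bennet}, so that $|Q|\ge T_k^{*}(S^{d-1})\,|O(d-2k+1)|/|O(d,\mathbb{F}_q)|\gg q^{2k^2-3k}$. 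You replace all of this with a moduli dimension count: $\binom{2k}{2}$ Gram entries, $2k$ sphere equations (your derivation of the single linear relation $2\sum_{i\ \mathrm{even}}g_{ii}+\sum_{i\ne j}(-1)^{i+j}g_{ij}=0$ is correct, as is the resulting dimension $2k^2-3k$), Witt-type realizability, and a Schwartz--Zippel count to discard degenerate data. Your route is cleaner in that it avoids both the Fourier machinery and the group-theoretic input of \cite{bennet}, and it makes the exponent $2k^2-3k$ transparent; what it loses is the by-product $T_k^{*}(S^{d-1})=(1+o(1))|S^{d-1}|^{2k-1}/q$, which the paper reuses later (in the proof of Proposition \ref{pro14} and in Remark \ref{rmm}), and it trades point counting for the arithmetic of quadratic forms over $\mathbb{F}_q$.

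Two steps still need to be completed before your sketch is a proof. First, the Schwartz--Zippel step requires each bad locus to be a \emph{proper} closed subvariety of the admissible affine space, i.e.\ you must exhibit one admissible Gram datum, over $\overline{\mathbb{F}}_q$ and for every odd characteristic (a characteristic-zero example does not suffice when $p$ is small and $q=p^r$ is large), which is non-degenerate, has all the prescribed nonzero norms and mutual distances, and is good; you flag this verification but do not carry it out, and it is the one place where a concrete construction is genuinely needed. Second, your realizability argument needs $d\ge 2k$ (comfortably $d\ge 2k+1$, so the complement form represents every value), whereas the proposition is stated for $d\ge\max\{2k-2,4\}$: when $d=2k-2$ the Gram matrix of the $2k-1$ points $x_0,x_0+x_1,\ldots,x_0+x_{2k-2}$ is forced to be singular, the parameter count drops, and your method cannot produce $q^{2k^2-3k}$ classes there. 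This boundary defect is shared by the paper itself, whose stabilizer factor $|O(d-2k+1)|$ is vacuous in the same range, and it is harmless for the intended application, since Theorem \ref{thm-lowerbound} assumes $d\ge 4k+2$, where both arguments go through.
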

With Proposition \ref{confi} in hand, we derive the following corollary. 
\begin{corollary}\label{qq}
Let $d, k\in \mathbb{N}$ with $d\ge 4k+2$, $\alpha\in (0, 1)$ and $q\ge q(\alpha, k)$. Let $E\subset S^{d-1}\subset \mathbb{F}_q^d$ with $|E|\ge \alpha q^{d-1}$. We have 
\[N(Q)\ge c(\alpha, k)q^{\frac{(2k-1)d-4k}{2}}.\]
\end{corollary}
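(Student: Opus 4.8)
The plan is to combine the counting bound of Proposition \ref{confi} with the Lyall--Magyar--Parshall theorem, applied to configurations of the appropriate size. The first step is to observe that every congruence class $X \in Q$ is, by the three defining conditions of $Q$, a non-degenerate spherical configuration on $2k$ points spanning exactly $2k-2$ dimensions: the linear independence of $x_1,\dots,x_{2k-2}$ forces $\mathtt{Span}(X-X)$ to be $(2k-2)$-dimensional, while the conditions $\|x_i\|\ne 0$ and $\|x_i-x_j\|\ne 0$ guarantee that all pairwise side-lengths among $\{x_0, x_0+x_1,\dots,x_0+x_{2k-2}\}$ are nonzero, i.e. that the configuration is non-degenerate in the sense required by Theorem \ref{lyalll}. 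The good-energy condition plays no role here; it is only needed in Lemma \ref{step1}.

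With this identification in hand, I would apply Theorem \ref{lyalll} with its parameter $k$ replaced by $2k-2$. A $((2k-2)+2)$-point spherical configuration spanning $2k-2$ dimensions is precisely a $2k$-point configuration spanning $2k-2$ dimensions, so every $X\in Q$ falls within its scope; moreover, since elements of $O(d,\mathbb{F}_q)$ preserve the quadratic form and hence all inner products, the notions of isometric copy and congruent copy coincide, so the theorem indeed lower-bounds $N(X)$. Its hypothesis reads $d \ge 2(2k-2)+6 = 4k+2$, exactly the dimension assumption of the corollary, and $q \ge q(\alpha, 2k-2)$, which is absorbed into $q \ge q(\alpha,k)$. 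Because the constant in Theorem \ref{lyalll} is uniform over all configurations of the given type, I obtain, uniformly over $X\in Q$,
\[
N(X) \;\ge\; c(\alpha, 2k-2)\, q^{\frac{(2k-1)d - (2k-1)(2k)}{2}}.
\]

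Summing over the congruence classes and using $N(Q) = \sum_{X\in Q} N(X) \ge |Q|\cdot \min_{X\in Q} N(X)$ together with the bound $|Q| \gg q^{2k^2-3k}$ of Proposition \ref{confi}, I obtain
\[
N(Q) \;\gg\; q^{2k^2-3k} \cdot q^{\frac{(2k-1)d - 2k(2k-1)}{2}}.
\]
It then remains to combine the exponents, which is a routine computation: over the common denominator $2$ the numerator equals $2(2k^2-3k) + (2k-1)d - 2k(2k-1) = (2k-1)d + 4k^2 - 6k - 4k^2 + 2k = (2k-1)d - 4k$, so the total exponent is $\frac{(2k-1)d-4k}{2}$, as claimed.

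The main obstacle is not the corollary itself, whose proof is the bookkeeping above, but rather checking that the inputs line up cleanly: one must verify that the defining conditions of $Q$ genuinely certify both non-degeneracy and the correct dimension count, so that Theorem \ref{lyalll} applies verbatim, and that the reindexing $k \mapsto 2k-2$ carries the threshold $2k+6$ exactly to $4k+2$. The genuinely hard content has already been isolated into Proposition \ref{confi}, the lower bound $|Q| \gg q^{2k^2-3k}$ on the number of admissible congruence classes; that is where the combinatorial work of choosing the $x_i$ subject to linear independence, nonzero side-lengths, and the good $k$-energy condition must be carried out.
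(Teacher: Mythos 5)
Your proposal is correct and follows essentially the same route as the paper: apply Theorem \ref{lyalll} with its parameter $k$ replaced by $2k-2$ to each congruence class in $Q$, then sum the uniform lower bound $c(\alpha,k)q^{\frac{(2k-1)d-(2k-1)(2k)}{2}}$ over the $\gg q^{2k^2-3k}$ classes supplied by Proposition \ref{confi}. Your write-up is in fact more careful than the paper's, since you explicitly verify that elements of $Q$ are non-degenerate $2k$-point spherical configurations spanning $2k-2$ dimensions, that the reindexed hypothesis $d\ge 2(2k-2)+6=4k+2$ matches the corollary's assumption, and that the exponents combine to $\frac{(2k-1)d-4k}{2}$.
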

\begin{proof}
For each configuration in $Q$,  we know from Theorem \ref{lyalll} that the number of  its copies in $E$ is at least 
\[c(\alpha, k)q^{\frac{(2k-1)d-(2k-1)(2k)}{2}}.\]
Taking the sum over all possible $q^{2k^2-3k}$ congruence classes, the lemma follows. 
\end{proof}
Combining Lemma \ref{step1} and Corollary \ref{qq}, Theorem \ref{kenergy} is proved. 
\subsection{Proof of Proposition \ref{confi}}
We now turn our attention to the Proposition \ref{confi}. The proof of Proposition \ref{confi} is quite complicated, which combines the usual Cauchy-Schwarz argument and the claim that most $k$-energy tuples in $S^{d-1}$ are $2k$-spherical configurations spanning $(2k-2)$ dimensions. We first start with some technical lemmas.

\begin{lemma}[Lemma 4.5, \cite{dothang}]\label{do}
For any $E\subseteq S^{d-1}$, and $k\ge 2$,  we have 
\[\left\vert T_k(E)-\frac{|E|^{2k-1}}{q} \right\vert \le q^{\frac{d-1}{2}} T_k^{1/2}T_{k-1}^{1/2},\]
where $T_1(E)=|E|$. 
\end{lemma}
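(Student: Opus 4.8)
The plan is to realize the energy as a weighted edge-count in the Cayley distance graph of the \emph{full} sphere and then feed it into the Expanding Mixing Lemma (Lemma \ref{exp}), whose spectral input is precisely the quantity $q^{(d-1)/2}$ that appears on the right-hand side. For $0\le j\le k$ let $r_j(x)=\#\{(a_1,\ldots,a_j)\in E^j:a_1+\cdots+a_j=x\}$ be the $j$-fold representation function, so that $\sum_x r_j(x)=|E|^j$ and $\sum_x r_j(x)^2=T_j(E)$ (with $T_1(E)=|E|$). First I would introduce two multisets in $\mathbb{F}_q^d$: the multiset $U$ of all $k$-fold sums $a_1+\cdots+a_k$ with $a_i\in E$, carrying multiplicity $r_k$, and the multiset $W$ of all $(k-1)$-fold sums, carrying multiplicity $r_{k-1}$; thus $|U|=|E|^k$ and $|W|=|E|^{k-1}$.

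The crucial observation is that, in the sphere graph $\mathcal{G}=C_{\mathbb{F}_q^d}(S^{d-1})$, the weighted number of edges $e(U,W)$ counts exactly those tuples $(a_1,\ldots,a_k,b_1,\ldots,b_{k-1})\in E^{2k-1}$ with $a_1+\cdots+a_k-(b_1+\cdots+b_{k-1})\in S^{d-1}$, i.e.\ the tuples that can be completed by a sphere point $b_k$ to the energy relation $\sum_i a_i=\sum_j b_j$. When $E=S^{d-1}$ this completing point automatically lies in $E$, so $e(U,W)=T_k(E)$ on the nose; this is exactly the regime that feeds Proposition \ref{confi}, where one counts $k$-energy tuples inside the whole sphere.

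Next I would record the spectral data of $\mathcal{G}$: it is regular on $n=q^d$ vertices of degree $D=|S^{d-1}|=(1+o(1))q^{d-1}$, and by \cite[Lemma 5.1]{ir} its second eigenvalue is $\mu=(1+o(1))q^{(d-1)/2}$. Applying the multiset form of Lemma \ref{exp} to $U$ and $W$ gives
\[
\left| e(U,W)-\frac{D\,|U|\,|W|}{n}\right|\le \mu\Big(\sum_u r_k(u)^2\Big)^{1/2}\Big(\sum_w r_{k-1}(w)^2\Big)^{1/2}.
\]
Here the main term is $\frac{D|U||W|}{n}=\frac{|S^{d-1}|\,|E|^{2k-1}}{q^d}=(1+o(1))\frac{|E|^{2k-1}}{q}$, which is precisely the asserted main term, while the two sums in the error are by definition $T_k(E)$ and $T_{k-1}(E)$, so the error is $(1+o(1))\,q^{(d-1)/2}T_k(E)^{1/2}T_{k-1}(E)^{1/2}$. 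Equivalently, one can phrase this Fourier-analytically: $e(U,W)=q^{-d}\sum_m \widehat{E}(m)^{2k-1}\widehat{S^{d-1}}(m)$, and the Gauss-sum estimate $\max_{m\ne 0}|\widehat{S^{d-1}}(m)|\ll q^{(d-1)/2}$ is what isolates the $m=0$ term $\frac{|S^{d-1}||E|^{2k-1}}{q^d}$ with the stated error.

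The step I expect to be the main obstacle is the identification of the edge-count $e(U,W)$ with the genuine energy $T_k(E)$. This identification is exact for $E=S^{d-1}$, but for a proper subset $E\subsetneq S^{d-1}$ the edge-count is the \emph{relaxed} energy in which the last summand ranges over all of $S^{d-1}$ rather than over $E$, so one must either argue that the completing point lands in $E$ or absorb the surplus; this is genuinely forced, since no choice of nonnegative multisets can reproduce $T_k(E)=q^{-d}\sum_m \widehat{E}(m)^{2k}$ through the sphere graph. A secondary, purely bookkeeping obstacle is the absorption of the $(1+o(1))$ factors: one checks that $\big||S^{d-1}|-q^{d-1}\big|$ and the deviation of $\mu$ from $q^{(d-1)/2}$ are of lower order relative to $q^{(d-1)/2}T_k^{1/2}T_{k-1}^{1/2}$, so that the clean coefficient $q^{(d-1)/2}$ in the statement is recovered for $q\ge q(\alpha,k)$ large. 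Once the identification is in place for the spherical case of interest, the inequality follows directly from the single application of Lemma \ref{exp} above.
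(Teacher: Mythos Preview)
The paper does not prove this lemma; it is quoted from \cite{dothang} and is invoked here only with $E=S^{d-1}$ (in Corollary~\ref{co11} and, through that corollary, in the proof of Proposition~\ref{confi}). For that case your plan is complete: when $E=S^{d-1}$ the sphere-graph edge count $e(U,W)$ coincides with $T_k(S^{d-1})$ on the nose, and the multiset form of Lemma~\ref{exp} with $\mu=(1+o(1))q^{(d-1)/2}$ delivers the two-sided inequality. This is precisely the argument one expects behind the cited result, so for the purposes of the present paper there is nothing to add.

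Your self-diagnosed obstacle for proper $E\subsetneq S^{d-1}$ is a genuine gap, not a technicality to be absorbed. The quantity $e(U,W)$ equals the \emph{relaxed} energy in which only $b_k\in S^{d-1}$ is required, and one always has $T_k(E)\le e(U,W)$; hence your argument does yield the upper bound $T_k(E)\le |E|^{2k-1}/q + Cq^{(d-1)/2}T_k^{1/2}T_{k-1}^{1/2}$ for every $E\subseteq S^{d-1}$. The matching lower bound, however, cannot be recovered from the sphere graph, and in fact the two-sided inequality as literally written need not hold for arbitrary $E$: with $k=2$, $d=7$, and a generic $E\subset S^{6}$ of size $q^{3}$ one has $T_2(E)\sim |E|^2=q^{6}$, whereas the ``main term'' is $|E|^3/q=q^{8}$ and the error term is only $q^{3}\cdot q^{3}\cdot q^{3/2}=q^{15/2}$. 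So the statement imported from \cite{dothang} should be read either as an upper bound on $T_k(E)$ or as a two-sided estimate for the relaxed count; under either reading your approach is exactly the right one.
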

\begin{corollary}\label{co11}
For $k, d\ge 2$, we have
\[T_k(S^{d-1})=(1+o(1))\frac{|S^{d-1}|^{2k-1}}{q}.\]
\end{corollary}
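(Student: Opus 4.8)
The plan is to apply Lemma \ref{do} directly with $E=S^{d-1}$ and show that the error term it produces is negligible against the main term $|S^{d-1}|^{2k-1}/q$. First I would record the standard count $|S^{d-1}|=(1+o(1))q^{d-1}$ for a non-degenerate sphere in $\mathbb{F}_q^d$ with $d\ge 2$ (read off from a Gauss-sum computation, and already implicit in the eigenvalue estimate of \cite{ir} quoted earlier). With this, the putative main term becomes $|S^{d-1}|^{2k-1}/q=(1+o(1))q^{(2k-1)(d-1)-1}$, and writing $T_j:=T_j(S^{d-1})$, Lemma \ref{do} gives
\[\left\vert T_k-\frac{|S^{d-1}|^{2k-1}}{q}\right\vert\le q^{\frac{d-1}{2}}T_k^{1/2}T_{k-1}^{1/2}.\]

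The one feature that prevents an immediate conclusion is that $T_k$ appears on the right-hand side as well, so I must supply an a priori bound on $T_k$ to treat the right-hand side as a genuine perturbation. The crude input is the trivial bound $T_j\le |S^{d-1}|^{2j-1}$ (freely choose $2j-1$ of the $2j$ entries, the last being forced). Substituting $T_k\le |S^{d-1}|^{2k-1}$ and $T_{k-1}\le |S^{d-1}|^{2k-3}$ bounds the error by $q^{(d-1)/2}|S^{d-1}|^{2k-2}$, whose ratio to the main term is $\sim q^{(d+1)/2}/|S^{d-1}|\sim q^{(3-d)/2}=o(1)$ as soon as $d\ge 4$. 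Since Corollary \ref{qq} invokes this estimate only in the regime $d\ge 4k+2$, this crude substitution already closes the argument in every case where the corollary is used, yielding matching upper and lower bounds $T_k=(1+o(1))|S^{d-1}|^{2k-1}/q$.

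To obtain the full range $d\ge 2$ asserted in the statement I would sharpen the a priori input by an induction on $k$. Assuming the estimate for $k-1$, i.e.\ $T_{k-1}=(1+o(1))|S^{d-1}|^{2k-3}/q$, the error bound improves to $A\,T_k^{1/2}$ with $A\sim q^{(d-2)/2}|S^{d-1}|^{(2k-3)/2}$. Setting $t=T_k^{1/2}$ and $M=|S^{d-1}|^{2k-1}/q$, this is a quadratic inequality $t^2-At-M\le 0$, and since $M/A^2\sim q^{d-1}\to\infty$ for $d\ge 2$ one has $A=o(\sqrt{M})$; hence $t\le(1+o(1))\sqrt{M}$, giving $T_k\le (1+o(1))M$, and feeding this back into the error term produces the matching lower bound $T_k\ge(1-o(1))M$. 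The base case $k=2$ uses the exact value $T_1=|S^{d-1}|$ in place of the asymptotic input.

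The main obstacle is exactly this self-referential structure: the error estimate of Lemma \ref{do} contains the very quantity being computed, so the proof must be bootstrapped rather than applied in one pass. The only genuinely delicate point is the smallest cases, where the saving $q^{d-1}$ (respectively $q^{d-2}$ for the $k=2$ base) is weakest; in the large-$d$ regime in which the corollary is actually applied, the crude trivial bounds already suffice and no induction is needed.
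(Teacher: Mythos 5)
Your proof is correct and follows essentially the same route as the paper: induction on $k$, applying Lemma \ref{do} at each step and solving the resulting quadratic inequality in $T_k^{1/2}$, with the base case $k=2$ handled via the exact value $T_1=|S^{d-1}|$ (and it even shares the paper's weakness at the boundary case $d=2$, where the saving degenerates). Your preliminary observation that the trivial bounds $T_j\le |S^{d-1}|^{2j-1}$ already make the error term negligible once $d\ge 4$ -- hence in every regime where the corollary is actually invoked -- is a small shortcut the paper does not record, but the core bootstrapping argument coincides with the paper's.
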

\begin{proof}
We prove by induction on $k$. 

For $k=2$, we apply Lemma \ref{do} to obtain 
\[\left\vert T_2(S^{d-1})-\frac{|S^{d-1}|^3}{q}\right\vert \le q^{\frac{d-1}{2}}\cdot T_2^{1/2} |S^{d-1}|^{1/2}.\]
Using the fact that $|S^{d-1}|\sim q^{d-1}$ and set $x=\sqrt{T_2(S^{d-1})}$, we have 
\[x^2\ge c_1q^{3d-4}-c_2q^{d-1}x, ~\mathtt{and}~x^2\le c_1q^{3d-4}+c_2q^{d-1}x,\]
for some positive constants $c_1$ and $c_2$. 
Solving these equations gives us $x\gg q^{\frac{3d-4}{2}}$ and $x\ll q^{\frac{3d-4}{2}}$, respectively. Thus, the base case is proved. 

Suppose that the claim holds for any $k-1\ge 2$, we now show that it also holds for the case $k$. Indeed, set $x=\sqrt{T_k(S^{d-1})}$, applying Lemma \ref{do} and the inductive hypothesis, we have 
\[x^2-q^{\frac{d-1}{2}}|S^{d-1}|^{\frac{2k-3}{2}} x-\frac{|S^{d-1}|^{2k-1}}{q}\le 0, ~ x^2+q^{\frac{d-1}{2}}|S^{d-1}|^{\frac{2k-3}{2}} x-\frac{|S^{d-1}|^{2k-1}}{q}\ge 0.\]
Solving these inequalities will give us 
\[x=(1+o(1))\left(\frac{|S^{d-1}|^{2k-1}}{q} \right)^{1/2}.\]
This completes the proof of the corollary.
\end{proof}
\begin{lemma}\label{adidaphat} For $d>n\ge 2$, let $L$ be the  number of tuples $(v_0, \ldots, v_n)\in (S^{d-1})^{n+1}$ such that $v_i-v_0\in \{a_1(v_1-v_0)+\cdots+a_{i-1}(v_{i-1}-v_0)+a_{i+1}(v_{i+1}-v_0)+\cdots+a_n(v_n-v_0)\colon a_1, \ldots, a_{i-1}, a_{i+1}, \ldots, a_n\ne 0\}$ for some $1\le i\le n$. We have  $L\ll \frac{|S^{d-1}|^{n+1}}{q^{2}}$.
\end{lemma}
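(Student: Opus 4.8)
The plan is to bound $L$ by a union bound over the index $i$ and, for each $i$, to reduce the count to a question about how many points of the sphere lie on a fixed affine subspace. Fix $i\in\{1,\dots,n\}$ and let $L_i$ denote the number of tuples for which the displayed membership holds at that particular index; then $L\le\sum_{i=1}^{n}L_i$, and since $n$ is constant it suffices to bound each $L_i$. First I would relax the condition: the set of admissible $v_i-v_0$ is contained in the subspace $U_i:=\mathtt{Span}\{v_j-v_0:j\ne 0,i\}$, which has $\dim U_i\le n-1$ (dropping the requirement that the coefficients be nonzero only enlarges the count). Now fix $v_0$ together with all $v_j$ for $j\ne 0,i$; there are at most $|S^{d-1}|^{n}$ such choices. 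Writing $v_i=v_0+w$ with $w\in U_i$, the constraint $v_i\in S^{d-1}$ becomes, using $\|v_0\|=1$, the single quadratic equation $\|w\|+2\,v_0\cdot w=0$ on $U_i$.

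The key step is to count solutions $w$ of this quadratic. Expressed in a basis of $U_i$ it is a polynomial of degree at most $2$ in $m:=\dim U_i$ variables, so by the Schwartz--Zippel bound it has at most $2q^{m-1}\le 2q^{n-2}$ zeros, provided the polynomial is not identically zero. Multiplying by the number of choices of the remaining points gives, for this generic part, a contribution $\ll |S^{d-1}|^{n}q^{n-2}$, and since $n\le d-1$ one checks that $|S^{d-1}|^{n}q^{n-2}\ll |S^{d-1}|^{n+1}q^{-2}$, which is exactly the desired bound.

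It remains to treat the exceptional configurations where $\|w\|+2\,v_0\cdot w$ vanishes identically on $U_i$; this is the main obstacle, since there Schwartz--Zippel gives nothing and the fibre can be as large as $q^{m}$. Vanishing of the quadratic part forces $U_i$ to be totally isotropic, and vanishing of the linear part forces $v_0\perp U_i$. A short computation then shows that in this case every pairwise inner product among $v_0,v_1,\dots,v_n$ equals $1$: indeed $v_0\cdot v_j=1$ and $v_j\cdot v_\ell=1$ for all $j,\ell\ne 0,i$, while $v_i-v_0\in U_i$ being isotropic and orthogonal to $v_0$ and to the other differences yields the same identities involving $v_i$. Thus every exceptional tuple has the all-ones Gram matrix.

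Finally I would bound the number of $(n+1)$-tuples of unit vectors all of whose pairwise inner products equal $1$. Selecting any three of them, the triple count is $\ll q^{3(d-1)-2}$: one chooses the first unit vector freely ($\ll q^{d-1}$), the second on the affine slice of the sphere cut out by $v_0\cdot v=1$ ($\ll q^{d-2}$), and the third subject to two inner-product conditions ($\ll q^{d-3}$ when the first two are linearly independent, and a strictly smaller count in the degenerate case where the second coincides with the first, using $d\ge 3$). Letting the remaining $n-2$ vectors range over the sphere then gives $\ll q^{(d-1)(n+1)-2}\ll |S^{d-1}|^{n+1}q^{-2}$, the binomial $\binom{n+1}{2}\ge 3$ comfortably absorbing the loss. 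Summing the generic and exceptional bounds over the constantly many $i$ completes the argument; the only delicate points are the linear-dependence cases inside these sphere-slice counts, which must be isolated and checked to be of strictly lower order.
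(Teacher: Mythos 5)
Your argument is correct, but it takes a genuinely different route from the paper's. The paper proves the lemma by Fourier analysis: after reducing to $i=n$ by symmetry, it writes the count via orthogonality of the additive character as a sum over frequencies $s\in\mathbb{F}_q^d$ and coefficients $a_1,\dots,a_{n-1}\in\mathbb{F}_q^*$, extracts the $s=\mathbf{0}$ main term $|S^{d-1}|^{n+1}/q^{d-n+1}$, and controls the $s\ne\mathbf{0}$ terms using the sphere decay bound $|\widehat{S^{d-1}}(m)|\ll q^{(d-1)/2}$ of Iosevich--Rudnev, splitting the coefficient sum according to whether $a_1+\cdots+a_{n-1}=1$ or not. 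Your proof avoids character sums entirely: you relax the nonzero-coefficient condition to membership in the span $U_i$, interpret the sphere constraint on $v_i=v_0+w$ as the quadratic equation $||w||+2v_0\cdot w=0$ on $U_i$, apply Schwartz--Zippel in the generic case, and in the identically-vanishing case use polarization (legitimate since $q$ is odd in this line of work) to force $U_i$ totally isotropic and orthogonal to $v_0$, hence an all-ones Gram matrix, which you then count by hand. The trade-off: the paper's computation is shorter and in fact yields the stronger bound $L\ll |S^{d-1}|^{n+1}/q^{d-n+1}$ (better than $q^{-2}$ as soon as $d>n+1$), at the price of invoking the nontrivial Fourier decay of the sphere; your argument is elementary and self-contained, and it makes the geometric source of the $q^{-2}$ loss explicit (isotropic, ``distance-zero'' configurations), but the crude treatment of the degenerate Gram tuples --- three vectors pinned, the rest free --- caps it at $q^{-2}$.

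One small repair: your claim that the third vector of an all-ones triple has $\ll q^{d-3}$ choices can fail for $d\in\{3,4\}$, because $v_0^\perp\cap v_1^\perp$ (of dimension $d-2$) may then be totally isotropic, in which case the restricted quadratic form vanishes identically and Schwartz--Zippel-type counting gives nothing. This does not harm the proof: the third vector lies in the affine subspace $v_0+(v_0^\perp\cap v_1^\perp)$, so it trivially has at most $q^{d-2}$ choices, and $q^{d-1}\cdot q^{d-2}\cdot q^{d-2}=q^{3(d-1)-2}$ is exactly the triple bound your final accounting needs.
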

\begin{proof}
Without loss of generality,  we count the number of such tuples with $i=n$. 

Let $\chi$ be the principle additive characteristic of $\mathbb{F}_q$. Using the orthogonality of $\chi$, one has
\begin{align*}
L&\le \frac{1}{q^d}\sum_{s\in \mathbb{F}_q^d}\sum_{v_0, \ldots, v_n\in S^{d-1}}\sum_{a_1, \ldots, a_{n-1}\in \mathbb{F}_q^*}\chi\left(s\cdot \bigg((v_n-v_0)-a_1(v_1-v_0)-\cdots-a_{n-1}(v_{n-1}-v_0)\bigg)\right)\\
&=\frac{|S^{d-1}|^{n+1}}{q^{d-n+1}}+\frac{1}{q^d}\sum_{s\ne \textbf{0}}\sum_{v_0, \ldots, v_n\in S^{d-1}}\sum_{a_1, \ldots, a_{n-1}\in \mathbb{F}_q^*}\chi\left(s\cdot \bigg((v_n-v_0)-a_1(v_1-v_0)-\cdots-a_{n-1}(v_{n-1}-v_0)\bigg)\right)\\
&=\frac{|S^{d-1}|^{n+1}}{q^{d-n+1}}+\frac{1}{q^d}\sum_{s\ne 0}\sum_{a_1, \ldots, a_{n-1}\in \mathbb{F}_q^*}\widehat{S^{d-1}}(a_1s)\cdots \widehat{S^{d-1}}(a_{n-1}s)\widehat{S^{d-1}}(s)\widehat{S^{d-1}}(s(1-a_1-\cdots-a_{n-1})),
\end{align*}
where $\widehat{S}(m)=\sum_{x\in \mathbb{F}_q^d}S(x)\chi(-x\cdot m)$. We now recall from \cite[Lemma 5.1]{ir} that $|\widehat{S^{d-1}}(m)|\ll q^{\frac{d-1}{2}}$ for $m\ne 0$ and $\widehat{S}(\mathbf{0})=|S^{d-1}|\sim q^{d-1}$. 
We now partition the sum $\sum_{a_1, \ldots, a_{n-1}\in \mathbb{F}_q^*}$ into two sub-summands $\sum_{a_1+\cdots+a_{n-1}\ne 1}$ and $\sum_{a_1+\cdots+a_{n-1}=1}$.

Therefore,
\begin{align*}
\sum_{a_1+\cdots+a_{n-1}\ne 1}\widehat{S^{d-1}}(a_1s)\cdots \widehat{S^{d-1}}(a_{n-1}s)\widehat{S^{d-1}}(s)\widehat{S^{d-1}}(s(1-a_1-\cdots-a_{n-1}))\ll q^{\frac{(d-1)(n+1)}{2}}\cdot q^{n-1},
\end{align*}
and 
\begin{align*}
\sum_{a_1+\cdots+a_{n-1}= 1}\widehat{S^{d-1}}(a_1s)\cdots \widehat{S^{d-1}}(a_{n-1}s)\widehat{S^{d-1}}(s)\widehat{S^{d-1}}(s(1-a_1-\cdots-a_{n-1}))\ll q^{\frac{(d-1)(n)}{2}}\cdot q^{d-1}\cdot q^{n-2}.
\end{align*}
These upper bounds are at most $\frac{|S^{d-1}|^{n+1}}{q^{d-n+1}}$ when $d>n$ and $n\ge 2$. In other words,  
\[L\ll \frac{|S^{d-1}|^{n+1}}{q^2}.\]
\end{proof}
\begin{lemma}\label{xyz}
Suppose that $d>2k-2$ and $k\ge 2$. The number of tuples $\{x_0, x_0+x_1, \ldots, x_0+x_{2k-2}, x_0+\sum_{i=1}^{2k-2}(-1)^{i+1}(x_0+x_i)\}$ in $(S^{d-1})^{2k}$ such that 
\[x_0+(x_0+x_1)+(x_0+x_3)+\cdots+(x_0+x_{2k-3})=(x_0+x_2)+(x_0+x_4)+\cdots+(x_0+x_{2k-2})+u,\]
where $u=x_0+\sum_{i=1}^{2k-2}(-1)^{i+1}(x_0+x_i)$, and $x_{i}\in \mathtt{Span}(x_1, \ldots, x_{i-1}, x_{i+1}, \ldots,  x_{2k-2})$ for some $1\le i\le 2k-2$ is $o(T_k(S^{d-1}))$ .
\end{lemma}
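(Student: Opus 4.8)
The plan is to first observe that the given parametrization builds in the $k$-energy relation automatically, so that the tuples being counted are exactly $k$-energy tuples on $S^{d-1}$ with a fixed pairing of the $2k$ points into two groups of $k$; consequently their total number, before imposing any dependence, is $T_k(S^{d-1}) = (1+o(1))|S^{d-1}|^{2k-1}/q$ by Corollary \ref{co11}. Indeed, setting $u = x_0 + \sum_{i=1}^{2k-2}(-1)^{i+1}(x_0+x_i)$ and using that $\sum_{i=1}^{2k-2}(-1)^{i+1}=0$ (an even number of alternating signs), one checks directly that both sides of the displayed energy equation equal $kx_0 + (x_1 + x_3 + \cdots + x_{2k-3})$, so the relation holds identically. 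The degeneracy hypothesis that $x_i \in \mathtt{Span}(x_1,\ldots,x_{i-1},x_{i+1},\ldots,x_{2k-2})$ for some $i$ is equivalent to the assertion that $\{x_1,\ldots,x_{2k-2}\}$ is linearly dependent, so I want to show that the number of admissible tuples for which this set is dependent is $o(T_k(S^{d-1}))$.

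The main step is to reduce an arbitrary linear dependence to a minimal one. If $\{x_1,\ldots,x_{2k-2}\}$ is dependent, then among all nontrivial relations $\sum c_i x_i = 0$ I choose one of minimal support $T \subseteq \{1,\ldots,2k-2\}$; by minimality all coefficients indexed by $T$ are nonzero and every proper subset of $\{x_j : j\in T\}$ is independent. Writing $r=|T|$, there are only $O_k(1)$ possible sets $T$, so it suffices to bound, for each fixed $T$, the number of tuples whose minimal dependent subset is $T$. Throughout I will discard the constraint $||u||=1$, which only enlarges the count and so preserves an upper bound.

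For the generic case $r\ge 2$, relabel so that $T=\{j_1,\ldots,j_r\}$; the minimal relation lets me solve $x_{j_r} = a_1 x_{j_1}+\cdots+a_{r-1}x_{j_{r-1}}$ with all $a_t\ne 0$. Applying Lemma \ref{adidaphat} with $n=r$ to the sub-tuple $(x_0, x_0+x_{j_1},\ldots,x_0+x_{j_r})\in (S^{d-1})^{r+1}$ — legitimate since $r\le 2k-2<d$ — bounds the number of such sub-tuples by $\ll |S^{d-1}|^{r+1}/q^2$. The remaining $2k-2-r$ vectors are unconstrained apart from $x_0+x_j\in S^{d-1}$, contributing a factor $|S^{d-1}|^{2k-2-r}$, for a total of $\ll |S^{d-1}|^{2k-1}/q^2 \sim T_k(S^{d-1})/q$. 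The degenerate case $r=1$ forces a single $x_i=0$; pinning one difference vector to $0$ and leaving $x_0$ together with the other $2k-3$ vectors free yields at most $|S^{d-1}|^{2k-2}$ tuples, and since $d\ge 3$ (because $d>2k-2\ge 2$) this equals $\sim T_k(S^{d-1})\cdot q^{2-d} = o(T_k(S^{d-1}))$.

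Summing these two estimates over the $O_k(1)$ choices of $T$ gives a total of $\ll_k T_k(S^{d-1})/q + |S^{d-1}|^{2k-2} = o(T_k(S^{d-1}))$, which is the claim. I expect the only delicate point to be the passage through minimal dependent subsets: Lemma \ref{adidaphat} controls only combinations in which every coefficient is nonzero, so it is essential to reduce an arbitrary dependence to a minimal one (where nonvanishing of the coefficients is automatic) before invoking it, and to handle the size-one support separately, since Lemma \ref{adidaphat} requires $n\ge 2$.
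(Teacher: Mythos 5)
Your proof is correct and takes essentially the same route as the paper: reduce an arbitrary linear dependence among $x_1,\ldots,x_{2k-2}$ to a sub-family relation in which every coefficient is nonzero, apply Lemma \ref{adidaphat} to that sub-family while letting the remaining vectors range freely over $S^{d-1}$, and compare the resulting bound $\ll |S^{d-1}|^{2k-1}/q^2$ against $T_k(S^{d-1})=(1+o(1))|S^{d-1}|^{2k-1}/q$ from Corollary \ref{co11}. The paper compresses your minimal-support reduction into the phrase ``or its sub-families''; your explicit handling of the support-one case $x_i=0$ (where Lemma \ref{adidaphat} is inapplicable because it requires $n\ge 2$, and a trivial count $\ll |S^{d-1}|^{2k-2}=o(T_k(S^{d-1}))$ is needed instead) fills in a detail the paper leaves implicit, but it is the same argument.
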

\begin{proof}
Applying Lemma \ref{adidaphat} for the family of vectors $\{x_0, x_0+x_1, \ldots, x_0+x_{2k-2}\}$ or its sub-families, we know that  there are at most $\frac{|S^{d-1}|^{2k-1}}{q^2}$ such tuples whenever $d>2k-2$ and $k\ge 2$. We also know from Corollary \ref{co11} that $T_k(S^{d-1})=(1+o(1))\frac{|S^{d-1}|^{2k-1}}{q}$. Thus, the lemma follows from the fact that 
\[\frac{|S^{d-1}|^{2k-1}}{q^2}=o\left(\frac{|S^{d-1}|^{2k-1}}{q}\right).\]
\end{proof}
We are ready to give a proof of Proposition \ref{confi}. 
\begin{proof}[Proof of Proposition \ref{confi}]
For any $k$-energy tuple $(a_1, \ldots, a_k, b_1, \ldots, b_k)\in S^{d-1}$, i.e. 
\begin{equation}\label{xk}a_1+\cdots +a_k=b_1+\cdots+b_k,\end{equation}
we set $a_i=a_1+x_i$ for $2\le i\le k$, and $ b_i=a_1+y_i$ for $1\le i\le k$.  

We first show that most of all tuples $(a_1, \ldots, a_k, b_1, \ldots, b_k)$ satisfying (\ref{xk}) will have the following properties 
\begin{itemize}
\item[a.] $\{x_2, \ldots, x_k, y_1, \ldots, y_k\}$ are linearly independent.
\item[b.] $||x_i-x_j||\ne 0, ||y_i-y_j||\ne 0$ for all pairs $i\ne j$, and  $||x_i-y_j||\ne 0, ||x_i||\ne 0, ||y_j||\ne 0$ for all pairs $i, j$.
\item[c.] For any $I, J\subseteq \{1, \ldots, k\}$, we have $\sum_{i\in I}a_i-\sum_{j\in J}b_j\ne 0$.
\end{itemize}
Indeed, let $T_k^{\mathtt{dep}}(S^{d-1}), T_k^{\mathtt{0}}(S^{d-1}), T_k^{\mathtt{bad}}(S^{d-1})$ be the number of $k$-energy tuples not satisfying (a), (b), and (c), respectively. We will prove that $T_k^{\mathtt{dep}}(S^{d-1}), T_k^{\mathtt{0}}(S^{d-1}), T_k^{\mathtt{bad}}(S^{d-1})=o(T_k(S^{d-1}))$.

{\bf Bounding $T_k^{\mathtt{dep}}$:} By Lemma \ref{xyz}, we have $T_k^{\mathtt{dep}}=o(T_k(S^{d-1}))$.

{\bf Bounding $T_k^0$:} It follows from our setting that $||x_i-x_j||=||a_i-a_j||$ and $||x_i-y_j||=||a_i-b_j||$. Hence, it is sufficient to count tuples with $||a_i-a_j||=0$ for some $1\le i\ne j\le k$. The other cases can be treated in the same way. 

Without loss of generality, we assume that $||a_1-a_2||=0$, which is equivalent with $||x_2||=0$. 

Let $U$ be the multi-set defined by 
\[U:=\{a_1+\cdots+a_k\colon a_i\in S^{d-1}, ||a_1-a_2||=0 \}.\]
Let $W$ be the multi-set defined by 
\[W:=\{b_1+\cdots+b_{k-1}\colon b_i\in S^{d-1}\}.\]

Let $e(U, W)$ be the number of pairs $(u, w)\in U\times W$ such that $u-w\in S^{d-1}$. Applying Lemma \ref{exp} for the graph $C_{\mathbb{F}_q^d}(S^{d-1})$, we have
\[e(U, W)\le \frac{|U||W|}{q}+q^{\frac{d-1}{2}}\left(\sum_{u\in \overline{U}}m(u)^2\right)^{1/2}\cdot \left(\sum_{w\in \overline{W}}m(w)^2\right)^{1/2},\]
where $m(u), m(w)$ are the multiplicities of $u$ and $w$ in $U$ and $W$, respectively. 

We know from \cite{hart} that for any two sets $X, Y\subseteq S^{d-1}$, the number of pairs $(x, y)\in X\times Y$ such that $||x-y||=0$ is at most $\frac{|X||Y|}{q}+q^{\frac{d}{2}}|X|^{1/2}|Y|^{1/2}$. 
So with $X=Y=S^{d-1}$, we obtain $|U|\le \frac{|S^{d-1}|^k}{q}$. It is clear that  $|W|=|S^{d-1}|^{k-1}$.

On the other hand, it is not hard to see that 
\[\sum_{u}m(u)^2\le T_k(S^{d-1}), ~\sum_{w}m(w)^2\le T_{k-1}(S^{d-1}).\]

Using Corollary \ref{co11}, one has 

\[e(U, W)\le \frac{|S^{d-1}|^{2k-1}}{q^2}+q^{\frac{d-1}{2}}\cdot \frac{|S^{d-1}|^{\frac{2k-1}{2}}}{q^{1/2}}\cdot \frac{|S^{d-1}|^{\frac{2k-3}{2}}}{q^{1/2}}\ll  \frac{|S^{d-1}|^{2k-1}}{q^2}.\]
On the other hand, $e(U, W)$ equals to the number of tuples satisfying (\ref{xk}) with $||a_1-a_2||=0$. 

In other words, 
\[T_k^0\ll  \frac{|S^{d-1}|^{2k-1}}{q^2}=o(T_k(S^{d-1})).\]
{\bf Bounding $T_k^{\mathtt{bad}}$:}
Let $I$ and $J$ be two subsets of $\{1, \ldots, k\}$. Assume that $|I|=|J|=m$. The case $|I|\ne |J|$ is treated in the same way. Without loss of generality, we assume that $I=J=\{1, \ldots, m\}$. We now count the number of $k$-energy tuples $(a_1, \ldots, a_k, b_1, \ldots, b_k)\in (S^{d-1})^{2k}$ such that $a_1+\cdots+a_m-b_1-\cdots-b_m=0$. This implies that $a_{m+1}+\cdots+a_k-b_{m+1}-\cdots-b_k=0$. 

We now show that the number of tuples $(a_1, \ldots, a_m,b_1, \ldots, b_m)\in \left(S^{d-1}\right)^{2m}$ such that $a_1+\cdots+a_m-b_1-\cdots-b_m=0$ is at most $\ll \frac{|S^{d-1}|^{2m-1}}{q}$. 

Indeed, using the same argument as in bounding $T_k^0$, let $U', W'$ be multi-sets defined by 
\[U':=\{a_1+\cdots+a_m\colon a_i\in S^{d-1}\},~~W=\{b_1+\cdots+b_{m-1}\colon b_i\in S^{d-1}\}.\]
The number of such tuples is bounded by $e(U', W')$ in the graph $C_{\mathbb{F}_q^d}(S^{d-1})$. As before, we also have 
\[\sum_{u\in \overline{U'}}m(u)^2=T_m(S^{d-1}), ~\sum_{w\in \overline{W'}}m(w)=T_{m-1}(S^{d-1}).\]
Using Lemma \ref{exp} and Lemma \ref{do}, we have 
\[e(U', W')\ll \frac{|S^{d-1}|^{2m-1}}{q}+q^{\frac{d-1}{2}}\cdot \frac{|S^{d-1}|^{2m-2}}{q}\ll \frac{|S^{d-1}|^{2m-1}}{q}.\]
Similarly, the number of tuples $(a_{m+1}, \ldots, a_k, b_{m+1}, \ldots, b_k)\in S^{d-1}$ such that $a_{m+1}+\cdots+a_k-b_{m+1}-\cdots-b_k=0$ is at most $\ll \frac{|S^{d-1}|^{2(k-m)-1}}{q}$.

Hence, the number of $k$-energy tuples with $\sum_{i\i I}a_i-\sum_{j\in J}b_j=0$ is at most $\ll \frac{|S^{d-1}|^{2k-2}}{q^2}$.  

Summing over all possibilities of sets $I$ and $J$, we obtain 
\[T_k^{\mathtt{bad}}(S^{d-1})=o(T_k(S^{d-1}).\]
From the bounds of $T_k^{\mathtt{dep}}$, $T_k^0$, and $T_k^{\mathtt{bad}}(S^{d-1})$, we conclude that most of $k$-energy tuples in $S^{d-1}$ satisfying $(a), (b)$, and $(c)$. We denote the number of those tuples by $T_k^*(S^{d-1})$. 

We recall that for any two non-trivial spherical configurations $X$ and $X'$, they are in the same congruent class if there exists $g\in O(d, \mathbb{F}_q)$ such that $gX=X'$. For each configuration in $Q$, say, \[X=\{x_0, x_0+x_1, x_0+x_2, \ldots, x_0+x_{2k-2}, x_0+\sum_{i=1}^{2k-2}(-1)^{i+1}(x+x_i)\},\]
the $2k-1$ vertices $x_0, x_0+x_1, \ldots, x_{0}+x_{2k-2}$ form a non-degenerate $(2k-2)$-simplex. We know from \cite{bennet} that the stabilizer of a non-degenerate $(2k-2)$--simplex in $S^{d-1}$ is of cardinality at least $|O(d-2k+1)|$. 

For any $X\in Q$, let $\mu(X)$ be the number of configurations which are congruent to $X$. We have $\sum_{X\in Q}\mu(X)=T_k^*(S^{d-1})$. By Cauchy-Schwarz inequality, we have 
\begin{equation}\label{eq134}\sum_{X\in Q}\mu(X)\le |Q|^{1/2}\cdot \left(\sum_{X}\mu(X)^2\right)^{1/2}.\end{equation}
On the other hand, $\sum_{X}s(X)\mu(X)^2$ is at most the number of pairs of configurations $(X, X')$ such that $X'=g(X)$ for some $g\in (d, \mathbb{F}_q)$, where $s(X)$ is the stabilizer of $X$.
Hence, we can bound $\sum_{X}s(X)\mu(X)^2$ by $T_k^*(S^{d-1})\cdot |O(d, \mathbb{F}_q)|$. This implies that 
\begin{equation}\label{eq234}\sum_{X}\mu(X)^2\le \frac{|O(d, \mathbb{F}_q)|\cdot T_k^*(S^{d-1})}{|O(d-2k+1)|}.\end{equation}
We recall from \cite{bennet} that $|O(n, \mathbb{F}_q)|\sim q^{\binom{n}{2}}$. From (\ref{eq134}) and (\ref{eq234}), we obtain  $|Q|\gg q^{2k^2-3k}$. This completes the proof.
\end{proof}
\section{Proof of Proposition \ref{thm1.9}}
\begin{proof}[Proof of Proposition \ref{thm1.9}]
Suppose $q=p^r$ with $r= \frac{1}{\epsilon}$ (assume that $1/\epsilon$ is an integer).

Let $\mathcal{A}$ be an arithmetic progression in $\mathbb{F}_q$ of size $p^{r-1}$. Let $X$ be the hyperplane $x_d=0$. Define 
\[H:=\{X+(0, \ldots, 0, a)\colon a\in \mathcal{A}\}.\]
Note that $H$ is a set of $|\mathcal{A}|$ translates of the hyperplane $X$. 

We have $|H|=q^{d-1}\cdot q^{\frac{r-1}{r}}= q^{d-\epsilon}$. It is not hard to see that 
\[|(H-H)\cap S^{d-1}|\ll q^{d-2}\cdot q^{\frac{r-1}{r}}\ll q^{d-1-\epsilon}=o(|S^{d-1}|).\]
For any $1\le m\ll |S^{d-1}|$, let $E\subset S^{d-1}\setminus (H- H)$ with $|E|=m$, we have \begin{equation}\label{contradic}(H-H)\cap E=\emptyset.\end{equation}

If $\mu<\frac{|E|}{2q^{\epsilon}}$, then by Lemma \ref{exp} for the graph $C_{\mathbb{F}_q^d}(E)$, one has 
\[e(H, H)\ge \frac{|H|^2|E|}{q^d}-\frac{|E||H|}{2q^{\epsilon}}>0,\]
whenever $|H|>\frac{q^{d-\epsilon}}{2}$, which contradicts to (\ref{contradic}).

In other words, we have $\mu \ge \frac{|E|}{2q^{\epsilon}}$. 

In the case $|E+E|=K|E|<q^d/2$, we start with an observation that 
\[T_2(E)\ge \frac{|E|^4}{|E+E|},\]
which implies
\[T_2(E)\ge \frac{|E|^4}{K|E|}. \]
Let $X$ be the multi-set in $\mathbb{F}_q^d$ defined by $X=E+E$. We can apply the Expander mixing lemma for the graph $C_{\mathbb{F}_q^d}(E)$ to get an upper bound for $T_2(E)$. Indeed, one has 
\[T_2(E)=e(X, -E)\le \frac{|E|^4}{q^d}+\mu \cdot T_2(E)^{1/2}\cdot |E|^{1/2}.\]
This gives us 
\[T_2(E)\le \frac{|E|^4}{q^d}+\mu^2\cdot |E|.\]
Since $K|E|<q^d/2$, we have $\mu^2|E|\gg \frac{|E|^4}{K|E|}$. This gives $\mu\gg \frac{|E|}{K^{1/2}}$. Hence, when $K\sim 1$, we have $\mu\gg |E|$. 
\end{proof}
\section{Proof of Proposition \ref{pro14}}
\begin{proof}[Proof of Proposition \ref{pro14}]
We have seen in the proof of Theorem \ref{thm-lowerbound} that the number of cycles of length $2k$ is equal to $q^d\cdot T_k^*(S^{d-1})$ and $T_k^*(S^{d-1})=(1+o(1))|S^{d-1}|^{2k-1}/q$. Hence, the number of cycles of length $2k$ in $C_{\mathbb{F}_q^d}(S^{d-1})$ is $(1+o(1))|S^{d-1}|^{2k-1}q^{d-1}$.

To prove the upper bound on the number of cycles of length $2k$ in a given set $A\subset \mathbb{F}_q^d$, we need to recall the following result from \cite{bene}. 
\begin{theorem}[Bennett-Chapman-Covert-Hart-Iosevich-Pakianathan, \cite{bene}]\label{Mophat-Adidaphat} For $A\subset \mathbb{F}_q^d$, $d\ge 2$ and an integer $k\ge 1$. Suppose that $\frac{2k}{\ln 2}q^{\frac{d+1}{2}}=o(|A|)$ then the number of paths of length $k$ with vertices in $A$ in $C_{\mathbb{F}_q^d}(S^{d-1})$ is $(1+o(1))\frac{|A|^{k+1}}{q^k}$.
\end{theorem}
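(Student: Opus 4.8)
The plan is to count all \emph{walks} of length $k$ with vertices in $A$ --- that is, sequences $(v_0,\dots,v_k)\in A^{k+1}$ with $\|v_{i+1}-v_i\|=1$ for every $i$ --- establish the asymptotic $(1+o(1))|A|^{k+1}/q^k$ for these, and then show that the walks which fail to be paths (those with a repeated vertex) contribute only a lower-order term. Since a path of length $k$ is exactly a walk on $k+1$ distinct vertices, this yields the stated count: the path count lies between the walk count and the walk count minus the degenerate walks, both of which are $(1+o(1))|A|^{k+1}/q^k$.

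For the walk count I would induct on $k$, feeding the multiset version of the Expander mixing lemma (Lemma \ref{exp}) into itself. Write $W_j(v)$ for the number of length-$j$ walks in $A$ ending at $v\in A$, let $W_j=\sum_v W_j(v)$ be the total, and let $U_j$ be the multiset on $A$ in which $v$ occurs with multiplicity $W_j(v)$. Then the number of length-$(j{+}1)$ walks equals $e(U_j,A)$ in $C_{\mathbb{F}_q^d}(S^{d-1})$, so Lemma \ref{exp} gives
\[
\Bigl| W_{j+1}-\tfrac{|S^{d-1}|\,W_j|A|}{q^d}\Bigr|\le \mu\Bigl(\sum_v W_j(v)^2\Bigr)^{1/2}|A|^{1/2},
\]
where $|S^{d-1}|/q^d=(1+o(1))/q$ and $\mu=(1+o(1))q^{(d-1)/2}$ by \cite[Lemma 5.1]{ir}. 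The main term obeys the recursion $W_{j+1}\approx W_j|A|/q$, which integrates to $W_k\approx|A|^{k+1}/q^k$; the work is in controlling the error.

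The crux is an upper bound on the second moment of the correct order, $\sum_v W_j(v)^2\ll W_j^2/|A|$, which says the walks are spread almost uniformly over $A$ and which I would obtain by a parallel induction (note $\sum_v W_j(v)^2$ counts pairs of length-$j$ walks sharing an endpoint, and can again be estimated by Lemma \ref{exp}). Granting it, $\bigl(\sum_v W_j(v)^2\bigr)^{1/2}|A|^{1/2}\ll W_j$, so the error above is $\ll\mu W_j$, a relative error of order $\mu q/|A|\sim q^{(d+1)/2}/|A|$ per step. To keep the product of the $k$ per-step factors $\bigl(1+O(q^{(d+1)/2}/|A|)\bigr)$ inside a $(1+o(1))$ window I would use $(1+\delta)^k\le e^{k\delta}$, so the accumulated error is controlled precisely when $k\,q^{(d+1)/2}/|A|$ is small --- this is exactly the hypothesis $\tfrac{2k}{\ln 2}q^{(d+1)/2}=o(|A|)$, and it explains both the linear dependence on $k$ and the constant $\ln 2$.

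Finally I would bound the non-path walks. If $v_i=v_j$ for some $i<j$, splitting the walk at the coincidence expresses it as a length-$i$ walk into $v_i$, a closed length-$(j{-}i)$ sub-walk at $v_i$, and a length-$(k{-}j)$ walk out of $v_j=v_i$. The dominant case is the immediate backtrack $v_j=v_{j-2}$, where the two closing constraints coincide; even then the same Expander-mixing estimates give a family of size $\ll|A|^k/q^{k-1}=O(q/|A|)\cdot|A|^{k+1}/q^k$, and all longer coincidences are smaller still. Summing over the $O(k^2)$ choices of coincident pair (a constant for fixed $k$) keeps the degenerate total at $o(|A|^{k+1}/q^k)$, and subtracting it yields the asymptotic. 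The main obstacle is the near-uniformity (second-moment) bound and its propagation through the induction, since it is that control --- not the leading recursion --- which fixes the sharp threshold $|A|\gg q^{(d+1)/2}$.
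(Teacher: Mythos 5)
A preliminary remark on the comparison itself: the paper never proves Theorem \ref{Mophat-Adidaphat} --- it is quoted from Bennett--Chapman--Covert--Hart--Iosevich--Pakianathan \cite{bene} and used as a black box inside the proof of Proposition \ref{pro14} --- so there is no internal proof to measure you against. Your strategy (count walks by iterating the multiset version of Lemma \ref{exp}, then show that walks with a repeated vertex are lower order) is essentially the graph-spectral reformulation of the Fourier iteration in \cite{bene}, and it also matches the machinery this paper uses elsewhere. The individual orders of magnitude you claim are correct: the per-step relative error is indeed $\mu q/|A|\sim q^{(d+1)/2}/|A|$, the backtracking walks are indeed the dominant degenerate family, of size $\ll |A|^k/q^{k-1}=O(q/|A|)\cdot |A|^{k+1}/q^k=o(|A|^{k+1}/q^k)$, and the constant $2k/\ln 2$ does arise from exponentiating $k$ multiplicative error factors.

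The one step that does not close as written is the ``near-uniformity'' second-moment bound, which you rightly call the crux. Since the graph is undirected, a pair of length-$j$ walks sharing their endpoint glues into a single walk of length $2j$, so $\sum_v W_j(v)^2=W_{2j}$ exactly; hence your claimed bound $\sum_v W_j(v)^2\ll W_j^2/|A|$ \emph{is} the walk-count bound for walks of length $2j$, a strictly longer object than anything your induction has reached. A ``parallel induction'' running upward in $j$ alongside the main one is therefore circular: estimating $\sum_v W_j(v)^2$ by Lemma \ref{exp} produces error terms involving still longer walks. The standard repair --- and it is precisely the mechanism of Lemma \ref{do} and Corollary \ref{co11} in this paper --- is to accept the self-reference and solve the resulting quadratic inequality. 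Concretely, splitting a length-$2j$ walk at its middle vertex gives $W_{2j}=e(U_j,U_{j-1})$, and Lemma \ref{exp} yields
\[
\Bigl|W_{2j}-(1+o(1))\tfrac{W_jW_{j-1}}{q}\Bigr|\le \mu\, W_{2j}^{1/2}W_{2j-2}^{1/2},
\]
which is quadratic in $W_{2j}^{1/2}$ and otherwise involves only strictly shorter walks; similarly $W_{2j+1}=e(U_j,U_j)$ has error $\mu W_{2j}$. Solving these by strong induction on the walk length gives every $W_m=(1+o(1))|A|^{m+1}/q^m$ with per-step relative error $O(q^{(d+1)/2}/|A|)$, exactly the estimate you wanted to feed into the main recursion (the same weighted counts also dispose of the degenerate walks). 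With this reorganization your outline becomes a complete proof; without it, the step you identified as the main obstacle is not merely unproved but is set up in a way that cannot terminate.
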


Let $N$ be the number of cycles of length $2k$ with vertices in $A$. For any two vertices $x, y\in A$, let $P(x, y)$ be the number of paths of length $k$ between $x$ and $y$ with vertices in $A$. It follows from Theorem \ref{Mophat-Adidaphat} that 
\[\sum_{x, y\in A}P(x, y)=(1+o(1))\frac{|A|^{k+1}}{q^k}.\]

It is clear that 
\[N=\sum_{x, y\in A}\binom{P(x, y)}{2}.\]
Using the convexity of the function $\binom{x}{2}$, one has 
\[N\gg |A|^2\cdot \binom{\frac{\sum_{x, y\in A}P(x, y)}{|A|^2}}{2}\gg \frac{|A|^{2k}}{q^{2k}},\]
provided that $|A|\gg q^{\frac{k}{k-1}}$. This completes the proof. 
\end{proof}
\begin{remark}\label{rmm}
We remark here that for any $k\ge 2$, there exists a set $E\subseteq S^{d-1}$ with $|E|\gg q^{\frac{d}{2k-1}}$ such that all cycles of length $2k$ in $C_{\mathbb{F}_q^d}(E)$ do not have distinct vertices. Such a set can be constructed easily as follows. Let $H$ be a $2k$-uniform hypergraph with the vertex set $S^{d-1}$, and each edge is a good $k$-energy tuple, then we know from the proof of Proposition \ref{confi} that the number of edges in $H$ is at most $|S^{d-1}|^{2k-1}/q$. Applying  Spencer's independent hypergraph number lemma  in \cite{spencer}, we get an independent set $E$ of size at least $\gg q^{\frac{d}{2k-1}}$. This set will satisfy our desired properties. 
\end{remark}

\section*{Acknowledgments}
The author was supported by Swiss National Science Foundation grant P4P4P2-191067. I would like to thank  Ilya Shkredov for useful discussions about the second eigenvalue of the local Cayley distance graphs.

 \bibliographystyle{amsplain}

\end{document}